\documentclass[sn-mathphys,Numbered]{sn-jnl}

\usepackage{graphicx}%
\usepackage{multirow}%
\usepackage{amsmath,amssymb,amsfonts}%
\usepackage{amsthm}%
\usepackage{mathrsfs}%
\usepackage[title]{appendix}%
\usepackage{xcolor}%
\usepackage{textcomp}%
\usepackage{manyfoot}%
\usepackage{booktabs}%
\usepackage{algorithm}%
\usepackage{algorithmicx}%
\usepackage{algpseudocode}%
\usepackage{listings}%
\usepackage{mathtools}

\usepackage{tikz}
\usepackage{tikz-cd}
\usepackage{tikz-network}
\usepackage{comment}%
\usepackage{lmodern}
\usepackage[capitalize,nameinlink,compress]{cleveref} 

\newtheorem{theorem}{Theorem}
\newtheorem{example}{Example}%
\newtheorem{remark}{Remark}%

\newtheorem{definition}{Definition}%
\newtheorem{lemma}{Lemma}%
\newtheorem{assumption}{Assumption}

\DeclareMathOperator*{\argmin}{argmin}
\DeclareMathOperator*{\argmax}{argmax}
\newcommand{\R}{\mathbb{R}}
\newcommand{\dtr}{\mathrm{d}_{\mathrm{tr}}}
\newcommand{\drtr}{\mathrm{d}_{\gamma}}
\newcommand{\Rmax}{\mathbb{R}_{\max}}
\newcommand{\Rmin}{\mathbb{R}_{\min}}

\newcommand{\Tmin}{\mathbb{T}_{\min}}
\newcommand{\TPS}[1]{\mathbb{TP}^{#1}} 
\newcommand{\TPT}[1]{ \R^{#1}/\R \mathbf{1}}

\newcommand{\fw}{\mathrm{FW}} 
\newcommand{\tfw}{\mathrm{TFW}} 
\newcommand{\ones}{\mathbf{1}} 
\newcommand{\zeros}{\mathbf{0}} 

\newcommand{\jjs}[1]{\textcolor{red}{John: #1}}

\begin{document}

\title{Tropical Fermat--Weber Problems over Non-Finite Data and their Inverse Formulations}


\author*[1]{\fnm{John} \sur{Sabol}}\email{sabol.john.j@gmail.com}

\author[1]{\fnm{Ruriko} \sur{Yoshida}}\email{ruriko.yoshida@gmail.com}

\affil*[1]{\orgdiv{Department of Operations Research}, \orgname{Naval Postgraduate School}, \orgaddress{\street{1 University Circle}, \city{Monterey}, \postcode{93943}, \state{CA}, \country{USA}}}


\abstract{The term \emph{tropical pseudonorm} refers to a family of (not necessarily symmetric) gauge functions that arise in tropical or idempotent geometry. An important characteristic of these gauges is their invariance under translation by a constant vector, allowing them to descent naturally to tropical projective spaces. In this work, we explore the tropical one-infinity pseudonorm, a polyhedral hybrid gauge that allows for tunable asymmetry, in the context of a Fermat--Weber location problem. We extend previous formulations in considering non-finite data, and we investigate several variants of the inverse problem, providing linear programming formulations for their solution.}

\keywords{Location problem, Tropical metric, Hybrid norm, Inverse linear programming, Max-plus semiring, Polyhedral gauge functions}



\maketitle

\section{Introduction}\label{sec:intro}

Broadly speaking, a location problem can be described as follows: given a set $\mathcal{X}$, some finite subset $V\coloneq(v_1,\ldots,v_m)\subset \mathcal{X}$ of points (sometimes called \emph{customers} or \emph{sites}), and a map $d:\mathcal{X}\times \mathcal{X} \rightarrow \R$ measuring distance/cost between two points, find a point $x^*\in \mathcal{X}$ that minimize some function $f:\mathcal{X}\rightarrow \R$ of the distances between $x^*$ and the points in $V$. When the goal is to find a point $x^*$ that minimizes the weighted \emph{sum} of distances between itself and the points in $V$, i.e.,
\begin{equation}\label{eq:fw_definition}
    x^*\in \argmin_{x\in \mathcal{X}} f(x),\qquad f(x)=\sum_{i=1}^mw_id(x,v_i),
\end{equation}
the problem is often called the \emph{Fermat--Weber problem} (or sometimes simply the ``Weber problem''). In the classic setup, $w_i\in \R_{>0}$ are positive weights that denote the relative importance of the elements in $V$. Whenever weights are uniform, i.e., $w_i=w_j$ for all $i,j\in[m]\coloneq\{1,\ldots,m\}$, we say the problem is \emph{unweighted}. Unweighted Fermat--Weber problems of the form \cref{eq:fw_definition} are also called $1$-medians. The set of all such minimizers in $\mathcal{X}$ is called the \emph{Fermat--Weber set} of $V$, denoted as $\mathcal{X}^*$:
\[
\mathcal{X}^* \coloneq \bigl\{x^* : x^* \in \argmin_{x\in \mathcal{X}} \sum_{i=1}^m w_id(x,v_i\bigr\}.
\]
We will sometimes use notation like $\fw(\cdot)^*$ to denote the Fermat--Weber set of a specific problem instance. That is, we will use the asterisk as a notational device to distinguish between the Fermat--Weber \emph{problem} $\fw(\cdot)$, and its set of \emph{optimal solutions} $\fw(\cdot)^*$.

The Fermat--Weber problem is well-studied in the Euclidean setting, with Weiszfeld's algorithm \cite{Weiszfeld1937} providing an iterative method for computing a solution as early as 1937. Beyond the Euclidean setting, Fermat--Weber problems under $\ell^p$ norms, particularly the rectilinear ($\ell^1$) and Chebyshev ($\ell^\infty$) metrics in the plane, have also been studied extensively in the location-theory literature \cite{LoveMorrisWesolowsky1988, DreznerHamacher2002, IdrissiLoridanMichelot1988, HansenPeetersRichardThisse1985}. Many generalizations have been explored, such as for non-symmetric distance functions \cite{DurierMichelot1985}, and negatively weighted points \cite{Plastria1996}. Still, considerably less attention has been paid to projective spaces. 

Beginning with Lin and Yoshida \cite{Lin2016TropicalFP}, several authors have investigated the Fermat--Weber problem using a \emph{tropical distance}, a pseudometric (not necessarily symmetric) that is invariant under scalar addition, and which has found applications in the field of phylogenetics, see e.g., \cite{LinSturmfelsTangYoshida2023, Com_neci_2023, CoxCuriel2023}. The invariance under scalar addition suggests we work in a quotient space called the \emph{tropical projective torus}, denoted $\TPT{n}$. 

We continue work in this direction, using the hybrid tropical distance introduced in \cite{sabol2025tropicalfermatweberpolytropes} which generalizes the symmetric and asymmetric tropical distances considered previously. By leveraging optimality conditions associated with the subdifferential of the objective function, we extend the tropical Fermat--Weber problem to data that are not necessarily finite and show that finite solutions (if they exist) can be computed through straightforward modifications to the dual network flow graph. Then, we consider a class of tropical Fermat--Weber inverse problems which, in a certain sense, exchange the roles between certain input data and decision variables. We provide conditions on feasibility and explicit linear programming (LP) formulations for their solution.


\section{Preliminaries}\label{sec:preliminaries}

We begin with a brief overview of tropical mathematics and its intersection with location problems. $\langle \cdot,\cdot\rangle$ represents the Euclidean inner product, $\#$ the cardinality of a set, and $[n]=\{1,\ldots,n\}$ the set of all positive integers up to $n$. The non-negative (resp. positive) orthant is denoted $\R^n_{\geq0}$ (resp. $\R^n_{>0}$). We use $\zeros$ to denote the origin, and similarly, $\ones=(1,\ldots,1)$ for the vector of all-ones. The notation $\ones_K$ indicates the vector with $1$ in the $k$-th index for each $k\in K\subseteq [n]$ and zeros elsewhere. Thus, $\ones_k$ indicates the $k$-th standard basis vector.

\subsection{Tropical Mathematics}

Tropical mathematics refers to a branch of mathematics built on replacing the usual arithmetic operation of addition with either minimum or maximum, and replacing multiplication with addition. The choice in convention regarding \emph{tropical} addition results in the so-called \emph{min-plus} or \emph{max-plus} algebra. In the min-plus algebra $(\Rmin, \oplus, \odot)$, we have $\Rmin\coloneq \R \cup \{+\infty\}$ with $a\oplus b=\min(a,b)$ for $a,b\in \Rmin$, where $\infty$ serves as additive identity. Similarly, in the max-plus algebra $(\Rmax, \boxplus, \odot)$, we have $\Rmax \coloneq \R \cup\{-\infty\}$ with $a\boxplus b=\max(a,b)$ for $a,b\in \Rmax$ with $-\infty$ as additive identity. Note that the lack of additive inverses make these structures semirings. The isomorphism between min-plus and max-plus semirings is easily seen by the identity $-\min(-a,-b)=\max(a,b)$. Component-wise application of the operations leads to the tropical semimodule $\Rmin^n$ (resp. $\Rmax^n$). For $a\in \Rmin^n$, the subset of indices corresponding to finite elements define the \emph{support} of $a$, i.e. $\mathrm{supp}(a) \coloneq \{i \mid a_i\in \R\}$. 

Tropical geometry can be viewed as the piecewise-linear geometry induced by logarithmic coordinates on projective cones, where multiplicative structure becomes additive and asymptotic dominance relations become explicit. This perspective is especially useful in asymptotic analysis, optimization, and related areas such as large deviations and statistical mechanics. An important feature of tropical geometry is its invariance with regard to (tropical) scalar multiplication. That is, the elements of $\Rmin$ (resp. $\Rmax$) form an equivalence class given by the relation $x\sim x+c\ones$ for any $c\in\R$. We sometimes write $\tilde{x}$ to denote the equivalence class associated with $x$. This invariance implies that one operates in \emph{tropical projective space}, which for the min-plus semiring, is defined as
\[
\TPS{n-1}_{\mathrm{min}}\coloneq \bigl\{\Rmin \setminus \{\infty\ones\}\bigr\}/\R\ones.
\]
The \emph{tropical projective torus}, denoted $\TPT{n}$ is the subset of tropical projective space comprised of finite coordinates. Thus, whereas the definition of $\TPS{n-1}$ depends on the version of the tropical semiring, $\TPT{n}$ is the same under both conventions. When operating over $\TPT{n}$, it is common to a select a coordinate representative from the equivalence class according to some convention, such as by requiring that the first coordinate be equal to $0$. Thus, we see that $\TPT{n}$ naturally identifies with $\R^{n-1}$.

The natural metric over $\TPT{n}$ is given by the \emph{tropical distance},
\[
\dtr(x,y)\coloneq \max_{i\in[n]}(y_i-x_i)-\min_{i\in [n]}(y_i-x_i),
\]
which is closely related to (and sometimes called) \emph{Hilbert's projective metric}. $\dtr$ is induced by the \emph{tropical norm} $\lVert x \rVert_{\mathrm{tr}} \coloneq \max_i(x_i)-\min_i(x_i)$, which one can verify defines a norm over $\TPT{n}$.

\emph{Tropical hyperplanes} are fundamental objects in tropical geometry.
\begin{definition}[Min-Plus Tropical Hyperplane, \cite{Joswig2021Essentials}]\label{def:trop_hyperplanes}
    Given a vector $a\in \Rmin^n$ with $\mathrm{supp}(a)\geq 2$, the (min-plus) \emph{tropical hyperplane} is given by
    \[
    \mathcal{T}^{\min}(a)\coloneq \{x\in \R^n : \# \argmin_i(a_i + x_i) \geq 2 \}.
    \]
\end{definition}
An equivalent definition for the max-plus tropical hyperplane $\mathcal{T}^{\max}(a)$ for $a\in \Rmax^n$ is obtained by replacing $\argmin$ with $\argmax$ in the definition above. The point $-a$, called the \emph{apex}, witnesses every term $a_i+x_i$ attain the minimum (resp. maximum) simultaneously. Any tropical hyperplane with apex $-a$ can be viewed as a translation $\mathcal{T}(a)=(-a)+\mathcal{T}(\zeros)$. The union of several tropical hyperplanes $\bigcup_{a\in A}\mathcal{T}^{\min}(a)$ forms a tropical hyperplane arrangement.

A tropical hyperplane partitions $\TPT{n}$ into \emph{tropical sectors}, where each sector corresponds to the set of points for which the $\argmin$ (resp. $\argmax$) in \cref{def:trop_hyperplanes} is attained for a particular index.
\begin{definition}[Min-Plus Sectors, \cite{Joswig2021Essentials}]
    For $a\in \Rmin^n$, the $i$-th (closed) sector of the tropical  hyperplane $\mathcal{T}^{\min}(a)$ is the set
    \[
    S^{\min}_i(a) \coloneq \bigl\{x\in \R^n : a_i+x_i = \argmin_{j\in[n]}(a_j+x_j)\bigr\}.
    \]
\end{definition}
Open tropical sectors correspond to the set of points for which the $\argmin$ (resp. $\argmax$) is attained uniquely. Max-plus sectors are defined analogously.

\subsection{Tropical Gauges}

The following definitions are standard in convex analysis; see e.g., \cite{Rockafellar1970} or \cite{HiriartUrrutyLemarechal2001} for further background. 

A \emph{gauge} is a convex, nonnegative, and positively homogeneous function $\gamma:\R^n\rightarrow [0,\infty]$, where positive homogeneity means $\gamma(c x)=c \gamma(x)$ for all scalars $c \geq 0$. Equivalently, gauges may be generated from convex sets as follows.
\begin{definition}[Gauge of a set, \cite{Rockafellar1970}]\label{def:gauge}
    Let $B\subset \R^n$ be a convex set that contains the origin. The \emph{gauge} associated with $B$ is the function $\gamma:\R^n\rightarrow \Rmin$ defined as
    \[
    \gamma_B(x)\coloneq \inf \{c>0 : x\in c B\},
    \]
    where $c B=\{c x \mid x\in B\}$. The set $B$ plays the role of the ``unit ball'' of the gauge. 
\end{definition}

It is straightforward to show that $\gamma_B$ is convex and positively homogeneous, and thus a gauge. The \emph{polar set} of $B$ is defined as
\[
B^\circ \coloneq \{p : \langle p,x\rangle \le 1, \, \forall x\in B\}.
\]
The polar $B^\circ$ is convex, closed, bounded, and contains the origin in its interior. Its associated gauge $\gamma^\circ$ is called the \emph{dual gauge} of $\gamma$.

Any gauge induces a distance function $d_B$ between points of $\R^n$ as follows:
\[
\forall \,x,y \in \R^n, \quad d_B(x,y)\coloneq \gamma_B(y-x) = \inf\{c >0 : y-x\in c B\}.
\]
Convexity and positive homogeneity of a gauge imply subadditivity, so $d_B$ obeys the triangle inequality. 

A gauge is \emph{symmetric} if $\gamma(x)=\gamma(-x)$. It is \emph{definite} if $\gamma(x)=0$ \emph{only} when $x=\zeros$. Thus, if a symmetric and definite gauge takes on only finite values, then it defines a norm. Relaxing the requirement for symmetry in the norm axioms results in what some authors call an \emph{asymmetric norm}, or a \emph{pseudonorm}\footnote{The term ``pseudonorm'' (and also \emph{seminorm}) is sometimes used to refer to the relaxation of definiteness rather than symmetry, though the literature is mixed. We follow the convention of \cite{Luo2018} here.} depending on the literature. We work only over finite-dimensional spaces here, so that if $B$ is bounded with $\zeros \in \mathrm{int}(B)$, we get that $\gamma_B$ is definite and finite-valued.

A gauge defined as in \cref{def:gauge} is sometimes called the \emph{Minkowski functional} of $B$. It is a \emph{weak Minkowski norm} in the sense of \cite{PapadopoulosTroyanov2014}. Additionally, bbserve that for any two points $x,y\in \R^n$ and vector $v\in \R^n$,
\[
d_B(x+v,y+v)=\gamma_B\bigl((y+v)-(x+v)\bigr)=\gamma_B(y-x)=d_B(x,y).
\]
That is, $d_B$ is translation invariant.    

When $B$ is a polytope, i.e., $B=\mathrm{conv}(\{b_1,\ldots,b_k\})$, $d_B$ is said to be \emph{polyhedral}. Any polyhedral gauge can be written as a max of linear forms,
\[
\gamma(x) = \max_{i\in [m]} \langle a_i, x\rangle,
\]
from which it follows that $B^\circ=\mathrm{conv}(\{a_1,\ldots,a_m\})$. Thus, the dual gauge of a polyhedral gauge is also polyhedral. For each face $F$ of a convex polytope $\mathcal{P}\subset \R^n$, define its normal cone as
\[
\sigma(F)=\{a : \langle a, y-x \rangle \leq 0, y\in \mathcal{P}, x \in F\},
\]
i.e., the set of linear functionals whose $\argmax$ over $\mathcal{P}$ contains $F$. The set
\[
\mathcal{N}(\mathcal{P})=\{\sigma(F) : F\,\, \text{a face of}\,\,\mathcal{P}\},
\]
is the \emph{normal fan} of $\mathcal{P}$. Normal cones associated to vertices of $\mathcal{P}$ are full-dimensional. Thus, each \emph{maximal} cone $\sigma_i$ of $\mathcal{N}(\mathcal{P})$ can be written
\[
\sigma_i=\{x : \langle a_i,x \rangle \geq \langle a_j,x \rangle, \,j\in[m]\}=\{x : \langle a_i-a_j, x \rangle \geq 0, \,j\in [m]\}.
\]
If $\zeros \in \mathrm{int}(\mathcal{P})$, then $\mathcal{N}(\mathcal{P})$ can be constructed by taking the cone over each face of $\mathcal{P}^\circ$. Not that since $B^\circ$ contains $\zeros$ by definition, then for polyhedral $\gamma$ we have $B^{\circ\circ}=B$ so that $\mathcal{N}(B^\circ)$ is generated by the rays $\{b_1,\ldots,b_k\}$.

If $B$ is a simplex, then the induced gauge distance $d_B$ is called a \emph{simplicial distance}.
\begin{definition}[Simplicial Distance, \cite{AminiManjunath2010}]\label{def:simplicial_distance}
    Let $H_0\coloneq \bigl\{x\in \R^n : \sum_ix_i=0 \bigr\}$ denote the hyperplane whose coordinates sum to zero. For $x,y\in H_0$, the \emph{simplicial distance} from $x$ to $y$ is given by
    \[
    d_\triangle(x,y)\coloneq \inf \{c : y-x+ c\ones \geq \zeros\}=\max_i(x_i-y_i).
    \]
\end{definition}

The reason for working over $H_0$ comes from the fact that the standard simplex, $\triangle_{n-1}\coloneq\{x \in \R^n_{\geq0}: \sum_i x_i=1\}$ does not contain $\zeros$ and so does not directly define a gauge on $\R^n$. By utilizing a quotient map $\pi:\R^n\rightarrow \TPT{n}$, the authors in \cite{AminiManjunath2010} show that for $x\in H_0$, $x\in c \pi(\triangle_{n-1})$ if and only if there exists $t\in \R$ such that $x+t\ones\in c\triangle_{n-1}$. This is equivalent to requiring $x_i+t\geq0$ for $i\in[n]$ and $\sum_i(x_i+t)=c$. Since $x\in H_0$, we have $nt=c$, and so, $x_i+c/n \geq 0$. Equivalently, $c\geq -nx_i$ for $i\in[n]$. It follows that the associated gauge is 
\[
\gamma_\triangle(x)=\inf \{c >0 : x \in c\pi(\triangle_{n-1}) \}=n\max_i(-x_i).
\]
Consequently, we see that the simplicial distance is precisely the simplicial gauge scaled by a factor of $1/n$:
\[
d_\triangle(x,y)=\tfrac{1}{n}\gamma_\triangle(y-x).
\]


In \cite{Com_neci_2023}, the authors introduce the \emph{asymmetric tropical distance} function, which can be viewed as a generalization of $d_\triangle$ that allows one to work with points over $\TPT{n}$ (note that $H_0$ is just one particular convention for selecting representatives for equivalence classes in $\TPT{n}$).
\begin{definition}[Asymmetric Tropical Distance, \cite{Com_neci_2023}]
    For $x,y\in \TPT{n}$, the \emph{asymmetric tropical distance} from $x$ to $y$ is given by
    \begin{equation}\label{eq:asym_dtr}
    \dtr^\rightarrow(x,y)\coloneq n\max_i(x_i-y_i) - \sum_i(x_i-y_i).
    \end{equation}
    Its associated gauge is
    \[
    \gamma_\mathrm{tr}^\rightarrow(x)=n\max_i(-x_i)-\sum_i(-x_i)=\sum_i\bigl(x_i-\min_j(x_j)\bigr).
    \]
\end{definition}
If we select $\overline{x}\coloneq x-(\min_ix_i)$ as our choice of representative for $\tilde{x}$, i.e., the unique point intersecting the boundary of $\R^n_{\geq0}$ (called the \emph{canonical coordinate} of $x$), then we see that $\lambda_{\mathrm{tr}}^\rightarrow(\overline{x})=\lVert\overline{x}\rVert_1$ is equal to usual the $\ell^1$ norm. 

A function is said to be \emph{increasing} on an interval $I$ if
\[
x\leq y \implies f(x)\leq f(y) \qquad \text{for all }  x,y\in I.
\]
Applying the coordinate-wise partial order $x\leq y \iff x_i\leq y_i$ for all $i$, it can be shown that $f(x)=\lVert x\rVert_1$ is an increasing function on $\R^n_{\geq0}$. Moreover, for \emph{distinct} $x, y \in \R^n_{\geq0}$ with $x \leq y$, we get $f(x)<f(y)$, which means that $f(x)$ is a \emph{strictly} increasing function on $\R^n_{\geq0}$. Thus, $\gamma_{\mathrm{tr}}^\rightarrow(\overline{x})$ is a strictly increasing function under the coordinate-wise partial order.

The tropical gauge associated with the asymmetric tropical distance appears also in the work of Luo \cite{Luo2018} as a specific example of what they call ``$B^p$-pseudonorms.''\footnote{Note that Luo uses ``pseudo'' here to mean not necessarily symmetric.} For a given $1\leq p \leq \infty$, these pseudonorms can be written
\begin{align}
\gamma_p(x)=\begin{cases}
    \sqrt[\leftroot{-3}\uproot{3}p]{\sum_{i\in[n]}(x_i-\min_{j\in[n]}x_j)^p} \quad &\text{if $p\in [1,\infty)$},\\
    \max_{i\in[n]}x_i-\min_{j\in[n]}x_j & \text{if $p=\infty$}.
\end{cases}
\end{align}
Com{\u{a}}neci \cite{Comaneci2024Convexity} calls these ``\emph{tropical} $\ell^p$ norms,'' and shows that such ``norms'' have the property of being $\triangle$\emph{-star-quasiconvex}. 
\begin{definition}[$\triangle$-star-quasiconvex functions, \cite{Comaneci2024Convexity}]\label{def:star-quasiconvex}
    A function $f_v:\TPT{n}\rightarrow \R$ is called $\triangle$-\emph{star-quasiconvex} with kernel $v$ if $f_v(x)=\overline{\gamma}(x-v)$ for some increasing function $\gamma:\R^n_{\geq0}\rightarrow\R$, where $\overline{\gamma}(x)\coloneq \gamma(\overline{x})$ is the gauge applied to the canonical coordinate $\overline{x}$. If $\gamma$ is \emph{strictly} increasing, then $f$ is called \emph{strictly $\triangle$-star-quasiconvex}.
\end{definition}
The tropical $\ell^1$ norm, which corresponds to the asymmetric tropical gauge function $\gamma_\mathrm{tr}^{\rightarrow}$, is strictly $\triangle$-star quasiconvex. Conversely, the tropical $\ell^\infty$ norm, which induces the tropical metric over $\TPT{n}$, is $\triangle$-star-quasiconvex, but \emph{not} strictly $\triangle$-star-quasiconvex. Notice that \cref{def:star-quasiconvex} implicitly measures distances $v\rightarrow x$, which amounts to exchanging the order of the arguments in $\dtr^\rightarrow$. That is, if we use $\gamma_{\mathrm{tr}}^\rightarrow(x)$ as our gauge $\gamma$, then  
\[
f_v(x)\coloneq\overline{\gamma}(x-v)=\dtr^\rightarrow(x-v,\zeros)=\dtr^\rightarrow(v,x),
\]
which differs from our original convention in \cref{eq:fw_definition}.

\begin{remark}
    The authors in \cite{AllamigeonEtAl2018} use a distance function similar to the simplicial distance which they describe as the tropical analogue of the \emph{Funk metric} found in Hilbert's geometry \cite{PapadopoulosTroyanov2014}. Formally, for $x,y\in \Rmax$ define
    \[
    d_F(x,y)\coloneq \inf \{c \geq0 : x-y+c\ones\geq 0\}=\max\{0, \max_i(y_i-x_i)\},
    \]
     working under the convention $-\infty-(-\infty)=-\infty$. Comparing with $d_\triangle$, the apparent swap between $x$ and $y$ can be attributed to the fact that the authors of \cite{AminiManjunath2010} use the min-plus convention, whereas the authors of \cite{AllamigeonEtAl2018} use max-plus. 
\end{remark}

It is important to note that tropical $\ell^p$ norms are positive definite over $\TPT{n}$ (though not over $\R^n$), a statement which somewhat mirrors our earlier comment regarding the tropical norm.

\subsection{One-Infinity Tropical Pseudonorms}

In \cite{sabol2025tropicalfermatweberpolytropes}, the authors propose a hybrid tropical distance that in effect interpolates between the asymmetric tropical distance and the tropical metric. In what follows we focus exclusively on gauges associated with this hybrid tropical distance. Henceforth, with a slight abuse of notation, we use $\lambda$ as a parameter defining distance $\dtr^\lambda$, and use $\gamma_{\mathrm{tr}}^\lambda(x)$ to denote the associated gauge. Similarly, we will use $B_\lambda$ and $B^\circ_\lambda$ to denote the convex set and polar associated with a particular value of $\lambda$. 

\begin{definition}[Generalized Tropical Distance, \cite{sabol2025tropicalfermatweberpolytropes}]
    For $x,\,y\in\TPT{n}$ and $\lambda\in [0,1]$, the \emph{generalized tropical distance} from $x$ to $y$ is defined as
    \begin{align*}
    \dtr^\lambda(x,y)&\coloneq \lambda n\max_j(x_j-y_j)-(1-\lambda)n\min_j(x_j-y_j)+(1-2\lambda)\sum_j(x_j-y_j),\\
    &=\lambda \dtr^\rightarrow(x,y) + (1-\lambda) \dtr^\rightarrow(y,x).
    \end{align*}
    It has associated gauge $\gamma_\mathrm{tr}^\lambda(x) =\lambda \gamma_\mathrm{tr}^\rightarrow(x)+(1-\lambda)\gamma_\mathrm{tr}^\rightarrow(-x)$.
\end{definition}

$\gamma_\mathrm{tr}^\lambda(x)$ is a pseudonorm over $\TPT{n}$, which follows from the well-known fact that the properties of gauges/norms are retained under positively weighted sums.

\begin{lemma}\label{lem:star_quasiconvexity}
    A tropical gauge $\gamma_\mathrm{tr}^\lambda(x)$ is $\triangle$-star-quasiconvex. Furthermore, it is \emph{strictly} $\triangle$-star-quasiconvex for any $\lambda \neq 1/2$.
\end{lemma}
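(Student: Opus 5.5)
We must show that $\gamma_\mathrm{tr}^\lambda$, evaluated at the canonical coordinate $\overline{x}=x-(\min_i x_i)\ones$, is an increasing function on $\R^n_{\geq0}$, and strictly increasing when $\lambda\neq 1/2$. The plan is to write $\gamma_\mathrm{tr}^\lambda$ as an explicit function on the nonnegative orthant, using that $\gamma_\mathrm{tr}^\lambda(x+c\ones)=\gamma_\mathrm{tr}^\lambda(x)$.

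\textbf{Step 1: the restricted formula.} From the definitions,
\[
\gamma_\mathrm{tr}^\rightarrow(x)=\sum_i x_i - n\min_j x_j, \qquad \gamma_\mathrm{tr}^\rightarrow(-x)=n\max_j x_j-\sum_i x_i .
\]
Hence
\[
\gamma_\mathrm{tr}^\lambda(x)=(1-\lambda)n\max_j x_j-\lambda n\min_j x_j+(2\lambda-1)\sum_i x_i .
\]
This expression is invariant under $x\mapsto x+c\ones$, since the coefficients of $c$ are $(1-\lambda)n-\lambda n+(2\lambda-1)n=0$. So it descends to $\TPT{n}$, and $\overline{\gamma}(x)=\gamma(\overline{x})$ is well defined.

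\textbf{Step 2: the increasing property.} Take $\gamma(y)\coloneq(1-\lambda)n\max_j y_j-\lambda n\min_j y_j+(2\lambda-1)\sum_i y_i$, viewed on $\R^n_{\geq0}$. The coefficient $2\lambda-1$ can be negative, so it is cleaner to split by the sign of $2\lambda-1$.

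If $\lambda\geq 1/2$, write
\[
\gamma(y)=(1-\lambda)\bigl(n\max_j y_j-\textstyle\sum_i y_i\bigr)+\lambda\bigl(\textstyle\sum_i y_i-n\min_j y_j\bigr).
\]
The term $n\max_j y_j-\sum_i y_i=\sum_i(\max_j y_j-y_i)$ is not monotone, so this decomposition does not work directly. Instead, evaluate at canonical coordinates, where $\min_j y_j=0$ is the relevant case. However, monotonicity is tested on all of $\R^n_{\geq0}$, and the minimum can increase.

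The cleaner route is the convex combination
\[
\gamma(y)=\lambda\,g_1(y)+(1-\lambda)\,g_2(y),\qquad g_1(y)=\textstyle\sum_i(y_i-\min_j y_j),\quad g_2(y)=\textstyle\sum_i(\max_j y_j-y_i).
\]
Neither $g_1$ nor $g_2$ is monotone on all of $\R^n_{\geq0}$. I will therefore use the convention implicit in the paper's treatment of $\gamma_\mathrm{tr}^\rightarrow$ (where $\lVert\overline{x}\rVert_1$ was called increasing): the increasing function is taken to be a representative formula that agrees with $\gamma_\mathrm{tr}^\lambda$ at canonical coordinates. That is, on $\R^n_{\geq0}$ define
\[
\gamma(y)\coloneq\lambda\textstyle\sum_i y_i+(1-\lambda)\bigl(n\max_j y_j-\sum_i y_i\bigr)=(2\lambda-1)\sum_i y_i+(1-\lambda)n\max_j y_j .
\]
This agrees with $\gamma_\mathrm{tr}^\lambda$ whenever $\min_j y_j=0$.

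\textbf{Step 3: monotonicity by cases.} Suppose $y\leq y'$ and $y\neq y'$. If $\lambda>1/2$, both terms are nondecreasing, and $\sum_i y_i<\sum_i y_i'$ gives strictness. If $\lambda<1/2$, rewrite
\[
\gamma(y)=(1-2\lambda)\bigl(n\max_j y_j-\textstyle\sum_i y_i\bigr)\cdot c+\dots
\]
or, more directly, use $\gamma(y)=\sum_i\bigl[(1-\lambda)\max_j y_j-(1-2\lambda)y_i\bigr]$. Increasing a single coordinate $y_k$ by $\delta>0$ changes $\gamma$ by at least $-(1-2\lambda)\delta$ from the sum term, plus $(1-\lambda)n\,\Delta\max$.

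This is the main obstacle. If $y_k$ is not the maximizer, $\Delta\max=0$ and $\gamma$ decreases. So for $\lambda<1/2$ monotonicity needs the alternative representative
\[
\gamma(y)=(1-\lambda)n\max_j y_j-\lambda n\min_j y_j+(2\lambda-1)\sum_i y_i
\]
with a different extension choice. I would then test both candidate extensions against the definition, and settle on the one matching the paper's intended notion.

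The case $\lambda=1/2$ should give $\tfrac n2(\max_j y_j-\min_j y_j)$. This is nondecreasing along the relevant comparisons but constant when only non-extremal coordinates move, so it is not strict. That establishes the exceptional case in the statement.
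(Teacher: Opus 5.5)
Your $\lambda>1/2$ case is the paper's argument. Your representative $(2\lambda-1)\sum_i y_i+(1-\lambda)n\max_j y_j$ is in fact monotone on all of $\R^n_{\geq0}$, not just between canonical points. Your $\lambda=1/2$ remarks are fine provided you use $\tfrac{n}{2}\max_j y_j$ rather than $\tfrac{n}{2}(\max_j y_j-\min_j y_j)$, which is not monotone off the canonical set.

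The genuine gap is $\lambda<1/2$. The proposal stops at the obstacle and proves neither the increasing nor the strict claim there. The repair you suggest, choosing a different extension off the canonical set, cannot work, because the failure already occurs between two canonical points. For $n=3$, take $y=(0,0,1)\leq y'=(0,1,1)$. Both have minimum $0$, yet $\gamma_{\mathrm{tr}}^{\lambda}(y')-\gamma_{\mathrm{tr}}^{\lambda}(y)=2\lambda-1<0$. Your ``alternative representative'' coincides with the first one on such points, so it decreases there too. So with the min-normalized canonical coordinate and the order on $\R^n_{\geq0}$, exactly as in the definition, the claim is false for $\lambda<1/2$, and no choice of increasing $\gamma$ rescues it.

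The missing idea is the reflection $\gamma_{\mathrm{tr}}^{\lambda}(x)=\gamma_{\mathrm{tr}}^{1-\lambda}(-x)$. This is what the paper does when it says to ``replace $\min$ with $\max$ in the definition of $\overline{x}$ and work over $\R^n_{\leq0}$.'' For $\lambda<1/2$, normalize by $\overline{x}=x-(\max_i x_i)\ones\in\R^n_{\leq0}$. Then $\gamma_{\mathrm{tr}}^{\lambda}(\overline{x})=-\lambda n\min_j\overline{x}_j+(2\lambda-1)\sum_i\overline{x}_i$, and for distinct $\overline{y}\leq\overline{x}$ in $\R^n_{\leq0}$,
\[
\gamma_{\mathrm{tr}}^{\lambda}(\overline{y})-\gamma_{\mathrm{tr}}^{\lambda}(\overline{x})=\lambda n\bigl(\min_j\overline{x}_j-\min_j\overline{y}_j\bigr)+(1-2\lambda)\sum_i(\overline{x}_i-\overline{y}_i)>0 .
\]
This is just your $\lambda>1/2$ computation applied to $\gamma_{\mathrm{tr}}^{1-\lambda}$ at $-\overline{x}\in\R^n_{\geq0}$, which is the min-normalized coordinate of $-x$. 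You should state explicitly that for $\lambda<1/2$ the lemma holds only in this mirrored sense: max-normalized, on the nonpositive orthant, with the order reversed. That is how the paper's proof has to be read.
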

\begin{proof}
    For $\lambda\in(1/2,1]$, consider $x\neq y\in \R^n$ such that $\overline{x}\leq \overline{y}$ in canonical coordinates. By assumption, $\sum_i(\overline{y}_i-\overline{x}_i)>0$ and also $\max_i\overline{y}_i-\max_i\overline{x}_i\geq 0$. We see that
    \[
    \gamma_{\mathrm{tr}}^\lambda(\overline{y})-\gamma_{\mathrm{tr}}^\lambda(\overline{x}) = (1-\lambda)n\bigl(\max_i\overline{y}_i - \max_i\overline{x}_i\bigr)+(2\lambda-1)\sum_i(\overline{y}_i-\overline{x}_i)>0.
    \]
    For $\lambda\in[0,1/2)$, replace $\min$ with $\max$ in the definition of $\overline{x}$ and work over the non-positive orthant $\R^n_{\leq0}$ using the same proof. 
    To see that $\lambda=1/2$ fails to be strictly $\triangle$-star-quasiconvex, simply consider $x=(1,0,0)$ and $y=(1,1,0)$.
\end{proof}

We now introduce some notation that will be helpful. Let $\alpha= (1-\lambda)n$, $\beta= \lambda n$, and $\delta = (2\lambda-1)=(\beta-\alpha)/n$ so that 
\[
\gamma_{\mathrm{tr}}^\lambda(x)=\alpha\max_i(x_i)-\beta\min_i(x_i)+\delta\sum_ix_i=\max_{i,j}\langle \alpha \ones_i - \beta \ones_j + \delta \ones, x\rangle.
\]
Thus, $B^\circ_\lambda=\mathrm{conv}(\{a_{ij}\mid 1\leq i\neq j \leq n\})$ where $a_{ij}=\alpha \ones_i - \beta \ones_j + \delta \ones$. If we assume $\lambda \in (0,1)$ (so that $\alpha, \beta > 0$), then a maximal cone $\sigma^\circ_{ij}$ of $\mathcal{N}(B^\circ_\lambda)$ is
\begin{align}\label{eq:polar_cone}
    \sigma^\circ_{ij}&=\{x : \langle a_{ij}-a_{kl}, x\rangle\geq0, \, 1\leq k\neq l\leq n\}, \notag \\
    &=\{x : \langle \alpha (\ones_i-\ones_k)-\beta (\ones_j-\ones_l), x \rangle, \, 1\leq k\neq l \leq n\}, \notag\\
    &=\{x : \alpha x_i - \alpha x_k - \beta x_j + \beta x_l \geq 0, \, 1\leq k\neq l \leq n\},\\
    &=\{x : x_i \geq x_k, \, x_j \leq x_k, \, k\in [n]\}, \notag\\
    &=\{x : x\in S^{\max}_i(\zeros),\, x\in S^{\min}_j(\zeros)\}. \notag
\end{align}
Thus, each maximal cone of the normal fan to the polar dual is precisely the intersection of the $i$-th (closed) max-plus sector and the $j$-th (closed) min-plus sector where $i\neq j$. In other words, $\mathcal{N}(B^\circ_\lambda)$ is cut out by the union $\mathcal{T}^{\max}(\zeros) \cup \mathcal{T}^{\min}(\zeros)$. When $\lambda=0$, this simplifies to $\mathcal{N}(B^\circ_\lambda)=\mathcal{T}^{\max}(\zeros)$. Likewise, for $\lambda=1$, we obtain $\mathcal{N}(B^\circ_\lambda)=\mathcal{T}^{\min}(\zeros)$. To the best of our knowledge, these results for the asymmetric case first appeared in \cite{ComaneciPlastria2025Breakdown}. See \cref{fig:gauge_balls} for a depiction of the ``unit balls'' $B_\lambda$, associated polars $B^\circ_\lambda$, and the normal fans $\mathcal{N}(B^\circ_\lambda)$ for various values of $\lambda$ in $\TPT{3}$.

\begin{figure}
    \centering
    \includegraphics[width=1\linewidth]{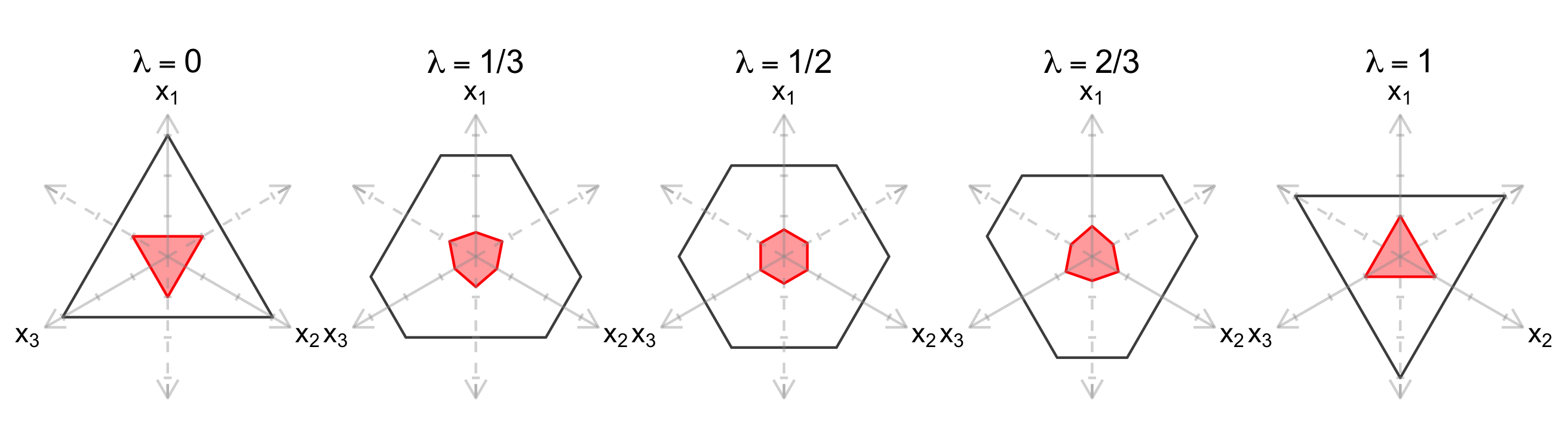}
    \caption{Polyhedral unit balls $B_\lambda$ (solid, red) for various $\lambda\in [0,1]$ along with the associated polar $B^\circ_\lambda$ (non-filled, black). Points are plotted on the hyperplane $H_0$ (per \cref{def:simplicial_distance}) using the ``Mercedes-Benz'' tight frame \cite{Waldron2018}. Positive (resp. negative) unit directions are shown with solid (resp. dashed) rays from the origin.}
    \label{fig:gauge_balls}
\end{figure}

Conceptually, $\gamma_{\mathrm{tr}}^\lambda$ can be viewed as the tropical analogue of the weighted \emph{one-infinity} hybrid norm $\ell^{1,\infty}$ introduced in \cite{WardWendell1980}, which interpolates between the rectilinear ($\ell^1$) and Chebyshev ($\ell^\infty$) norms. Here, we are interpolating between \emph{tropical} $\ell^1$ and the \emph{tropical} $\ell^\infty$ (pseudo)-norms, which explains our use of the term \emph{tropical $\ell^{1,\infty}$ norm}. Actually, we are interpolating between the $\underline{B}^1$-pseudonorm and the $\overline{B}^1$-pseudonorm (in the notation of \cite{Luo2018}) since we consider both tropical semirings simultaneously. $\lambda$ is the factor that controls the direction and degree of the asymmetry, i.e., the relative importance of distances \emph{to} vs. \emph{from} a particular point in the sample. Following \cite{WardWendell1980}, we call $\lambda$ the \emph{bias} of the gauge.

In analogy to the classical case, $B^p$-pseudonorms are polyhedral for $p\in\{1,\infty\}$, and thus, so too is $\gamma_{\mathrm{tr}}^\lambda$. Polyhedral norms/gauges allow one to formulate associated location problems as linear programs, and such formulations were provided \cite{Lin2016TropicalFP, Com_neci_2023, sabol2025tropicalfermatweberpolytropes} for tropical Fermat--Weber problems.

Consider a sample $A=\{a_1,\ldots,a_m\}\subset \R^n$, and for each $a_i\in A$, an associated tropical gauge parameterized by $\lambda_i\in[0,1]$. We use $\Lambda=(\lambda_i)_{i\in[m]}$ to denote the vector of biases that determine each tropical gauge, and use $a_i+\mathcal{N}_i$ as shorthand for the normal cone of the polar associated to each gauge translated by $a_i$. Taking the union of all such translated fans, we obtain a polyhedral decomposition of $\R^n$, which we denote by
\begin{equation}\label{eq:cellular_decomposition}
    \mathscr{C}(A,\Lambda)\coloneq \cup_{i}(a_i+\mathcal{N}_i).
\end{equation}

\begin{definition}[Elementary Convex Set, \cite{DurierMichelot1985}]\label{def:elementary}
    Let $p$ denote a vector of $B^\circ_\lambda$ such that $\gamma^\circ_\lambda(p)=1$ and have $\sigma(p)$ denote the corresponding normal cone in $\mathcal{N}(B^\circ_\lambda)$. Take one such $p_i$ for each $i\in [m]$, where $p_i$ can depend on $\lambda_i$. For a family $P\coloneq (p_i)_{i\in [m]}$, define the (possibly empty) set
    \[
    \mathcal{C}_P\coloneq \bigcap_{i\in [m]}(a_i+\sigma(p_i)).
    \]
    A nonempty convex set $C$ is called an \emph{elementary convex set} if there exists a family $P$ such that $C=\mathcal{C}_P$.
\end{definition}

\begin{remark}
    For the fully asymmetric case, $\Lambda=\zeros$ (resp. $\Lambda=\ones$), $\mathscr{C}(A,\Lambda)$ is the \emph{covector decomposition} generated by the tropical hyperplane arrangement $\cup_{a\in A}\mathcal{T}^{\max}(-a)$ (resp. $\cup_{a\in A}\mathcal{T}^{\min}(-a)$). For general $\Lambda$, $\mathscr{C}(A,\Lambda)$ was called the ``bi-tropical covector decomposition'' in \cite{sabol2025tropicalfermatweberpolytropes}. Each elementary convex set is a cell of $\mathscr{C}(A,\Lambda)$. Thus, the set of all elementary convex sets is indexed by the set of covectors in the associated covector decomposition. 
\end{remark}

\subsection{The Tropical Fermat--Weber Problem}

We now define our tropical Fermat--Weber problem. Given a sample of points $V=\{v_1,\ldots,v_m\}$, with weights $w=(w_1,\ldots,w_m)$, and biases $\Lambda=(\lambda_1,\ldots,\lambda_m)$ such that $v_i\in\R^n$, $w_i\in\R_{\geq 0}$, and $\lambda_i \in [0,1]$ for $i\in[m]$, the \emph{tropical Fermat--Weber problem} is
\begin{equation}\label{eq:TFW}
    \tfw(V,\Lambda,w): \quad \underset{x\in \TPT{n}}{\text{minimize}} \sum_{i=1}^m w_i\dtr^{\lambda_i}(x,v_i).
\end{equation}

We use $V_{ij}$ to denote the $j$-th coordinate of $v_i$. Additionally, we update our notation to incorporate weights and to index over the sample:
\[
\alpha_i^w = w_i(1-\lambda_i)n, \quad \beta_i^w = w_i\lambda_in, \quad \delta_i^w = w_i(2\lambda_i-1), \quad \text{and} \quad \delta^w_{tot}=\sum_i\delta^w_i.
\]
Using the epigraph formulation from linear programming, if we define primal variables $\phi_i,\,\psi_i$ such that 
\begin{equation}\label{eq:primal_constraints}
    \phi_i\geq x_j-V_{ij}, \qquad \psi_i \leq x_j-V_{ij}, \quad \text{for all } i\in[m],\, j\in[n],
\end{equation}
then $\tfw(V,\Lambda,w)$ can be solved via the following a linear program $(\mathrm{P})$:
\begin{align*}
    \mathrm{(P)} \quad \underset{\pi}{\text{maximize}} \quad & b^\top \pi \qquad \qquad & \mathrm{(D)} \quad \underset{\rho \geq 0}{\text{minimize}} \quad & c^\top \rho\\
    \text{s.t.}\quad & A^\top \pi \leq c, & \text{s.t.}\quad & A\rho = b.
\end{align*}
Here, $\pi$ denotes the collection of primal variables ($x,\phi,\psi$), $A^{\top}$ is a $(2mn)\times(n+2m)$ matrix whose constraints correspond to the inequalities in \cref{eq:primal_constraints}, and $b$ is a vector whose entries contain $\{\delta^w_{tot} \cup (-\beta_i^w)_{i\in[m]} \cup (\alpha_i^w)_{i\in [m]}\}$. The dual program $(\mathrm{D})$ is included on the right.

\begin{remark}\label{rem:total_unimodularity}
    From the formulation, it is immediate that matrix $A$ is totally unimodular and $(\mathrm{D})$ is a min-cost flow problem. Thus $(\mathrm{D})$ be solved in strongly polynomial time. Recovering optimal primal solutions for $(\mathrm{P})$ is accomplished by computing path closures over the residual graph, which is standard in network optimization. Details can be found in \cite{AMO_NetworkFlows} or \cite{sabol2025tropicalfermatweberpolytropes}. 
\end{remark}

As our tropical Fermat--Weber problem has linear costs, the following theorem applies.
\begin{theorem}[Durier--Michelot {\cite[Theorem 3.1]{DurierMichelot1985}}]\label{thm:Durier}
    Consider the Fermat--Weber problem with linear costs \cref{eq:fw_definition}. If $C$ is a elementary convex set associated with a family $P=(p_i)_{i\in [m]}$ (where $p_i$ is as in \cref{def:elementary}) such that $\sum_iw_ip_i=\zeros$, then $C$ is bounded, and $C=\mathcal{X}^*$.
\end{theorem}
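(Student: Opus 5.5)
The plan is to prove the Durier--Michelot theorem with standard convex analysis. Write $f(x)=\sum_i w_i\gamma_i(x-v_i)$, where each $\gamma_i$ is a polyhedral gauge with polar $B_i^\circ$. Then $f$ is convex and finite, so $x^*$ minimizes $f$ exactly when $\zeros\in\partial f(x^*)=\sum_i w_i\,\partial\gamma_i(x^*-v_i)$. The sum rule applies because every term is finite and convex.

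The first step is to identify the subdifferential of a gauge. For a polyhedral gauge $\gamma(x)=\max_k\langle a_k,x\rangle$ we have
\[
\partial\gamma(y)=\{p\in B^\circ:\langle p,y\rangle=\gamma(y)\}.
\]
For $p$ with $\gamma^\circ(p)=1$, the condition $p\in\partial\gamma(y)$ is therefore equivalent to $y\in\sigma(p)$, the normal cone of $B^\circ$ at $p$. This is precisely why the sets $a_i+\sigma(p_i)$ appear in the definition of an elementary convex set.

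Next, take $C=\mathcal C_P$ with $\sum_i w_ip_i=\zeros$. For any $x\in C$ we have $x-v_i\in\sigma(p_i)$ for all $i$, so $p_i\in\partial\gamma_i(x-v_i)$. Then $\zeros=\sum_i w_ip_i\in\partial f(x)$, and hence $C\subseteq\mathcal X^*$.

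For the reverse inclusion, note that $f$ is affine on $C$ with $f(x)=\sum_i w_i\langle p_i,x-v_i\rangle=-\sum_i w_i\langle p_i,v_i\rangle=:f^*$. For any $y$, since $p_i\in B_i^\circ$, we get $f(y)\ge\sum_i w_i\langle p_i,y-v_i\rangle=f^*$. Equality requires $\langle p_i,y-v_i\rangle=\gamma_i(y-v_i)$ for every $i$ with $w_i>0$. Using the subdifferential description again, this means $y-v_i\in\sigma(p_i)$ for each such $i$, so $y\in C$ and $C=\mathcal X^*$. If some $w_i=0$, that index imposes no constraint, and the statement should be read with positive weights as in the classical setup.

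The last step is boundedness, which I expect to be the main obstacle. The recession cone of $C$ is $\bigcap_i\sigma(p_i)$. Take a direction $d$ in this cone. Then $\langle p_i,d\rangle=\gamma_i(d)$ for each $i$, which gives $\sum_i w_i\gamma_i(d)=\langle\sum_i w_ip_i,d\rangle=0$. With positive weights and definite gauges, this forces $d=\zeros$, so $C$ is bounded.

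In the tropical setting, definiteness holds only on $\TPT{n}$, and $\ones$ lies in every cone. So I would argue on $\R^n/\R\ones$, that is, on $H_0$, where the gauges $\gamma_{\mathrm{tr}}^\lambda$ are definite. Boundedness is then understood modulo $\R\ones$. The care needed is in checking that the subdifferential identification and the recession-cone argument both pass correctly to the quotient.
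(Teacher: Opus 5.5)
Your proof is correct. The paper gives no argument of its own for this statement; it simply imports Theorem~3.1 of Durier and Michelot. What you have written is the standard convex-analysis proof of that result. You get $C\subseteq\mathcal{X}^*$ from $\zeros\in\partial f$. You get the reverse inclusion from the global lower bound $f(y)\geq\sum_i w_i\langle p_i,y-v_i\rangle$, whose equality case is exactly $y-v_i\in\sigma(p_i)$ for all $i$. You get boundedness from the recession cone $\bigcap_i\sigma(p_i)$.

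Both of your caveats are genuinely needed for the way the paper uses the theorem. The problem $\tfw$ allows $w_i=0$, and in $\R^n$ every $C$ contains the lines $x+\R\ones$.

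The passage to the quotient that you flag as delicate costs nothing. The gauge $\gamma_{\mathrm{tr}}^{\lambda}(y)=\max_{i\neq j}\langle a_{ij},y\rangle$ is precisely the support function of $B^\circ_\lambda\subset H_0$, so your subdifferential identification holds verbatim on $\R^n$. Moreover, $\gamma_{\mathrm{tr}}^{\lambda}(d)=0$ if and only if $d\in\R\ones$, for every $\lambda\in[0,1]$. So your recession-cone computation, with positive weights, shows $\bigcap_i\sigma(p_i)=\R\ones$, which means $C$ is bounded in $\TPT{n}$.

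One small point of convention. Minimizing $\sum_i w_i\gamma_i(x-v_i)$, as you do, is the reading that matches $\mathcal{C}_P=\bigcap_i(v_i+\sigma(p_i))$. With the paper's literal $d(x,v_i)=\gamma_i(v_i-x)$, the cones would have to be $v_i-\sigma(p_i)$, but your argument is otherwise unchanged.
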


Since the cone $\sigma(p_i)$ depends only on the gauge function $\gamma_i$, and since selecting $p_i$ doesn't depend in any way on $V$, evaluating $\sum_iw_ip_i$ can be done completely independent of the sample data $V$. Geometrically, the statement in the theorem corresponds to a selection of cones associated to vectors satisfying $\sum_iw_ip_i=\zeros$. If, upon translation of each cone $\sigma(p_i)$ by $v_i$, the resulting intersection $C=\cap_i(v_i+\sigma(p_i)$ is nonempty, then the by the theorem, $C=\mathcal{X}^*$ defines the optimal elementary convex set.

Since we have already shown that cones $\sigma(p_i)$ correspond to intersections of tropical sectors, and since the constraints of $(\mathrm{P})$ index the sectors of the tropical hyperplanes associated to each sample point, ``choosing'' a particular $p_i$ is essentially equivalent to selecting arcs (incident to the nodes associated with $\phi_i$ and $\psi_i$) in the min-cost flow graph that are permitted to carry non-zero flow.

\section{Samples with Non-Finite Coordinates}

Gauges are defined over a domain with finite coordinates. Tropical Fermat--Weber problems have been (to the best of our knowledge) likewise proposed only for samples $V\subset \TPT{n}$, rather than say, $V\subset \Rmax^n$ or $V\subset (\R\cup\{\pm\infty\})^n$. Indeed, direct attempts to solve (tropical) location problems for samples admitting non-finite coordinates appear fundamentally ill-posed. 

When the Fermat--Weber problem is equally weighted and given uniform, metric gauges, optimal solutions are often called \emph{geometric $1$-medians}, as they correspond to points that are ``central'' with regard to the sample data. Even in asymmetric cases, this notion of centrality pervades. For example, \cite{Com_neci_2023} calls optimal solutions to the unweighted, asymmetric Fermat--Weber problem ($\Lambda=\zeros$ or $\Lambda=\ones$) ``tropical medians'' of the sample, while \cite{JaggiKatzWagner2010} calls them ``tropical centerpoints.''

In the one-dimensional setting, medians of samples from $\R \cup \{\pm \infty\}$ are well-understood using order (vice distance) measures. In higher dimensional settings $n\geq2$, however, no standardized notion of median exists, and several possible definitions (e.g., coordinate-wise median, Oja's simplicial median \cite{Oja1983}, Tukey's half-space median \cite{Tukey1975}) are available. While some multi-dimensional definitions, such as those based on \emph{halfspace depth}, avoid distance-based measures, much of their literature still assumes finite data. 

In this section, we extend our previous linear programming framework for solving $\tfw$ to samples where the coordinates are not limited to finite elements. We begin with a small example to build intuition.

\begin{example}\label{ex:running_example}
    Consider the symmetric, unweighted problem for $V\subset \Rmax^n$ consisting of three parameterized points: $v_1=(a,2,2)^\top$, $v_2=(0,b,1)^\top$, and $v_3=(0,2,c)^\top$ for $a,b,c\in\Rmax$. \cref{fig:extending_example} depicts the setup, where we plot all points under the projection that sets the first coordinate to zero. One may verify that for any finite, non-positive values of $a,b,c$, the tropical Fermat--Weber set $\mathcal{X}^*$ does not change. Even when $a,b,c=-\infty$, a \emph{min-plus} tropical hyperplane with apex at any point in $V$ remains well-defined. The rightmost figure depicts the covector decomposition of $\TPT{3}$ in the compactification $\TPS{2}$, see \cite[Chapter 6]{Joswig2021Essentials}, which also induces a covector decomposition of the \emph{boundary stratum} shown by the outmost circle.
\end{example}

\begin{figure}
    \centering
    \begin{tikzpicture}%
    \fill[{gray!20}](1,1) -- (2,2) -- (2,1);
    \draw [-,](2,2) -- (2,3.3);
    \draw [-,](2,2) -- (3,2);
    \draw [-,](2,2) -- (-0.5,-0.5);
    \draw [-,dashed](2,2) -- (-0.8,2);
    \draw [->,dashed](2,2) -- (2.8,2.8);
    
    \draw [-,](0,1) -- (0,2.5);
    \draw [-,](0,1) -- (3,1);
    \draw [-,](0,1) -- (-0.8,0.2);
    \draw [-,dashed](0,1) -- (0,-0.8);
    \draw [->,dashed](0,1) -- (-0.8,1);
    \draw [-,dashed](0,1) -- (2.2,3.2);
    
    \draw [-,](2,0) -- (3,0);
    \draw [-,](2,0) -- (1.2,-0.8);
    \draw [->,dashed](2,0) -- (2,-0.8);
    \draw [-,dashed](2,0) -- (-0.5,0);
    \draw [-,dashed](2,0) -- (2.8,0.8);
    \draw [-,dash dot](2,0) -- (2,2);
    
    \node at (2,2)[circle,draw=black,fill=white,inner sep=1.5pt]{};
    \node at (0,1)[circle,draw=black,fill=white,inner sep=1.5pt]{};
    \node at (2,0)[circle,draw=black,fill=white,inner sep=1.5pt]{};
    \node at (2.3,1.8) {\small$v_1$};
    \node at (-0.2,1.2) {\small$v_2$};
    \node at (2.3,-0.2) {\small$v_3$};
    \node at (1.7,1.3) {\small $\mathcal{X}^*$};
    \begin{scope}[xshift=5cm,yshift=0.4cm]
        \fill[{gray!20}](0.5,0.8) -- (1.2,1.5) -- (1.2,0.8);
        \draw [dotted] (1,1) circle (1.8cm);
        \draw [->](2.1,2.4) -- (-0.4,-0.1);
        \draw [<-](2.75,0.8) -- (-0.75,0.8);
        \draw [<-](1.2,2.75) -- (1.2,-0.75);
        \node at (2.11,2.41)[circle,draw=black,fill=white,inner sep=1.5pt]{};
        \node at (-0.78,0.8)[circle,draw=black,fill=white,inner sep=1.5pt]{};
        \node at (1.2,-0.78)[circle,draw=black,fill=white,inner sep=1.5pt]{};
        \node at (2.4,2.55) {\small$v_1$};
        \node at (-1.1,0.8) {\small$v_2$};
        \node at (1.2,-1.1) {\small$v_3$};
        \node at (1,1) {\tiny$\mathcal{X}^*$};
    \end{scope}
    \end{tikzpicture}
    \caption{Left: A sample (open circles) $V=\{v_1,v_2,v_3\}$ in $\Rmax^3$ plotted on the plane $x_1=0$ with $a=b=c=0$. The Fermat--Weber set, $\mathcal{X}^*$ is shown by the shaded triangle. Min-plus (solid) and max-plus (dashed) tropical hyperplanes with apex at each sample point depict the cellular decomposition $\mathscr{C}(V,\Lambda)$. Right: The same problem for $a=b=c=-\infty$, where the large circle represents the boundary strata. Min-plus hyperplanes, which remain well-defined for apices at these points, are retained.}
    \label{fig:extending_example}
\end{figure}

The previous example shows that from a combinatorial perspective, nothing has changed regarding the max-plus covector of the cell associated with $x^*$. In fact, this would also be true for the min-plus covectors except for the fact that min-plus tropical hyperplanes are not defined over elements of $\Rmax$. Still, the intuition is the right one. 
\begin{definition}\label{def:subdiff_of_gauge}
    The \emph{subdifferential} of gauge $\gamma$ at $x$ is
    \[
    \partial \gamma (x) = \{p\in B^\circ : \langle p,x\rangle=\gamma(x)\}.
    \]
\end{definition}

Finding a family $P=(p_i)_{i\in[m]}$ such that $\sum_iw_ip_i=0$ as in \cref{thm:Durier} has the interpretation of selecting a subgradient for each point in the sample such that their weighted sum satisfies the first-order optimality condition $\zeros\in \partial f(x^*)$ of an unconstrained convex optimization problem. It is useful then to note that only the direction (and not the distance) from $x\in \R^n$ to a point $v_i\in V$ is necessary in defining the subdifferential at $x$. That is, all that matters is which cones of $\mathcal{N}(B^\circ_\lambda)$ with vertex $x$ contain $v_i$.

Consider $V=\{v_1,\ldots,v_m\}$, where for each point $v_i\in (\R\cup \{\pm \infty\})^n$, $i\in [m]$, we make the following assumption.

\begin{assumption}\label{assumption:not_identically_infinite}
    $v_i$ is not identically $\mathbf{\infty}$ or $\mathbf{-\infty}$. That is, $v_i\neq (\pm\infty)\ones$.
\end{assumption}
Assume also that each point is assigned some tropical gauge $\gamma^{\lambda_i}_{\mathrm{tr}}$. The next lemma shows how we can define the subdifferential at $x\in \R^n$ in this setup.
\begin{lemma}\label{lem:subdiff_defined}
    For $f(x)=\sum_iw_i\gamma^{\lambda_i}_{\mathrm{tr}}(v_i-x)$ where each $\gamma^{\lambda_i}_{\mathrm{tr}}$ is the tropical gauge function associated with $\lambda_i$ and each $v_i\in (\R\cup \{\pm \infty\})^n$ satisfies \cref{assumption:not_identically_infinite}, the subdifferential $\partial f(x)$  evaluated at $x\in \R^n$ is well-defined and equals
    \[
    \partial f(x)=\sum_i w_i \mathrm{conv}\bigl(\{\alpha_i\ones_j-\beta_i\ones_{j'} + \delta_i\ones : j\in M_i, j'\in N_i\}\bigr),
    \]
    where $\alpha_i=(1-\lambda_i)n$, $\beta_i=\lambda_i n$, $\delta_i=(\beta_i-\alpha_i)/n$, and
    \[
    M_i\coloneq \argmax_j(V_{ij}-x_j), \quad N_i\coloneq \argmin_j(V_{ij}-x_j).
    \]
\end{lemma}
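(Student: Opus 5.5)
The plan is to reduce to the finite case summand by summand, and then use the sum rule for subdifferentials of finite convex functions.

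First, I would make sense of $f$ when $v_i$ has infinite entries. For finite $v_i$ and $y=v_i-x$,
\[
\gamma^{\lambda_i}_{\mathrm{tr}}(y)=\alpha_i\max_j y_j-\beta_i\min_j y_j+\delta_i\sum_j y_j,
\]
so each summand is a max of the affine functions $x\mapsto\langle a^{(i)}_{jj'},v_i-x\rangle$, with $a^{(i)}_{jj'}=\alpha_i\ones_j-\beta_i\ones_{j'}+\delta_i\ones$. This value is invariant under adding $c\ones$ to $v_i$, because $\langle a^{(i)}_{jj'},\ones\rangle=\alpha_i-\beta_i+n\delta_i=0$. When $v_i$ has infinite coordinates, I would define the summand through a limiting procedure and show that its subdifferential depends only on $M_i$ and $N_i$, as the discussion after \cref{def:subdiff_of_gauge} suggests.

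The convention I would adopt is the following. Use the projective invariance to subtract $\ones$ times a finite coordinate, which exists by \cref{assumption:not_identically_infinite} when $v_i$ has mixed entries. If there is no finite coordinate, but entries of both signs appear, choose any real normalization. Then $f_i$ is taken up to an additive constant, possibly infinite, that is independent of $x$. Concretely, write $v_i^{(t)}$ for $v_i$ with $+\infty$ replaced by $t$ and $-\infty$ by $-t$. The sets $M_i$ and $N_i$ are nonempty and well-defined in the extended reals since $x$ is finite. Moreover, $M_i\cap N_i=\emptyset$ unless all $V_{ij}-x_j$ coincide, and that forces $v_i$ to be finite, again by \cref{assumption:not_identically_infinite}.

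The key local step is to show that for $t$ large the active index sets of $v_i^{(t)}-x'$, for $x'$ near $x$, are contained in $M_i$ and $N_i$ respectively, with equality at $x'=x$. Then, locally around $x$, $f_i^{(t)}(x')-f_i^{(t)}(x)$ is independent of $t$. The limit function is therefore a well-defined finite convex piecewise-linear germ at $x$, and its subdifferential is well-defined.

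To compute that subdifferential, I would apply the max-rule for pointwise maxima of affine functions, namely that $\partial$ equals the convex hull of the active gradients. By the chain rule with $y=v_i-x$, this gives
\[
\partial f_i(x)=-\mathrm{conv}\{a^{(i)}_{jj'}:j\in M_i,\ j'\in N_i\}.
\]
Here I expect the sign to be absorbed by the paper's convention, which identifies subgradients through $\langle p,v_i-x\rangle=\gamma(v_i-x)$ as in \cref{def:subdiff_of_gauge}, i.e.\ $\partial\gamma(v_i-x)$. I would state explicitly that this is the object meant. I would also check that the faces match: the maximizers of $\langle a_{jj'},y\rangle$ over $j\neq j'$ are exactly the pairs with $j\in M_i$ and $j'\in N_i$. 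This follows from the expression $\alpha y_j-\beta y_{j'}+\delta\sum y$ together with $\alpha,\beta\ge0$. The degenerate cases $\lambda_i\in\{0,1\}$ need a remark, since there one of the index sets is irrelevant but the formula still holds.

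Finally, I would scale each summand by $w_i\ge0$ and use the Moreau--Rockafellar sum rule, which is valid because every summand is finite and polyhedral near $x$. This yields the Minkowski sum in the statement.

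The main obstacle is the first step: making "well-defined" rigorous when $v_i$ has $+\infty$ and $-\infty$ entries simultaneously, so that $f$ itself is $\infty-\infty$. I would handle this by defining $\partial f(x)$ as the subdifferential of the local germ, equivalently as the limit of $\partial f^{(t)}(x)$, which stabilizes once $t$ exceeds $\max_j|x_j|$ plus the finite spread.
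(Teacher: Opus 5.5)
\textbf{The proposal has a genuine gap: the step that fails is your claim that, for large $t$, the active index sets of $v_i^{(t)}-x$ coincide with $M_i$ and $N_i$ at $x'=x$.}

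Your truncation $v_i^{(t)}$ does not depend on $x$, so it breaks ties among the infinite coordinates according to $x$. If $M_i^\infty=\{j: V_{ij}=+\infty\}$ is nonempty, then for large $t$ the argmax of $v_i^{(t)}-x$ is $\argmin_{j\in M_i^\infty}x_j$. This is in general a proper subset of $M_i=M_i^\infty$, and the same happens for the $-\infty$ entries and $N_i$.

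Concretely, take $n=3$, $\lambda_i\in(0,1)$, $v_i=(+\infty,+\infty,0)$, $x=(0,1,0)$, and write $a_{jj'}=\alpha_i\ones_j-\beta_i\ones_{j'}+\delta_i\ones$.
\begin{itemize}
\item The statement has $M_i=\{1,2\}$ and $N_i=\{3\}$, so it asserts the nondegenerate segment $\mathrm{conv}\{a_{13},a_{23}\}$.
\item But $v_i^{(t)}-x'=(t-x'_1,\,t-x'_2,\,-x'_3)$ has argmax $\{1\}$ for every $x'$ near $x$.
\item Hence your limit germ $x'\mapsto-\alpha_i\min(x'_1,x'_2)+\beta_i x'_3-\delta_i\sum_j x'_j$ is affine near $x$.
\item Its subdifferential is the single point $-a_{13}$, i.e.\ $a_{13}$ in the paper's sign convention.
\end{itemize}
So your route, carried out correctly, proves a different and generally smaller formula than the stated one.

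This cannot be fixed by choosing a better limit function. In the example the stated set is the same nondegenerate segment for every $x\in\R^n$. A finite convex function is differentiable almost everywhere, so no limit function has this as its subdifferential, and the Moreau--Rockafellar step cannot deliver it either.

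The paper's proof uses instead an $x$-dependent, direction-based approximation. For the given $x$, it approximates $v_i$ by finite points $u^{(k)}\to v_i$ with $u^{(k)}-x$ staying in one fixed cone of the normal fan. The sequence is chosen so that all $+\infty$ coordinates, and all $-\infty$ coordinates, remain tied relative to $x$. For instance, set $u^{(k)}_j=x_j+k$ if $V_{ij}=+\infty$, $u^{(k)}_j=x_j-k$ if $V_{ij}=-\infty$, and $u^{(k)}_j=V_{ij}$ otherwise.

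For $k$ large, the argmax and argmin of $u^{(k)}-x$ are then exactly $M_i$ and $N_i$. Your finite max-rule computation then gives $\partial\gamma^{\lambda_i}_{\mathrm{tr}}(u^{(k)}-x)$ equal to the stated convex hull for all such $k$. Finally, $\partial f(x)$ is \emph{taken to be} the weighted Minkowski sum $\sum_i w_i\,\partial f_i(x)$ of these pieces, rather than derived from a sum rule for an underlying convex function. Your remarks on the sign convention and on the cases $\lambda_i\in\{0,1\}$ are fine.
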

\begin{proof}
    Take arbitrary $i\in[m]$ and define a sequence $u^{(k)}\in \R^n$, where $u^{(1)}=x$, $u^{(k)}\rightarrow v_i$ coordinate-wise, and so that $u^{(k)}\in x+\sigma^\circ$ $\forall k$ and for some normal cone $\sigma^\circ$ of the polar dual of the gauge associated with $v_i$. Using \cref{def:subdiff_of_gauge} and the fact that the gauge is polyhedral, for any $u^{(k)}$ we can write 
    \[
    \partial f_i^{(k)}(x) = \partial \gamma_{\mathrm{tr}}^{\lambda_i}(u^{(k)}-x) = \mathrm{conv}(\{\alpha_i\ones_j-\beta_i\ones_{j'} +\delta_i\ones : j\in M_i^{(k)}, j'\in N_i^{(k)}\}),
    \]
    where 
    \[
    M_i^{(k)}\coloneq \argmax_j(u^{(k)}_j-x_j), \quad N_i^{(k)}\coloneq \argmin_j(u^{(k)}_j-x_j).
    \]
    Our construction of the sequence implies that the sets $M_i^{(k)}$ and $N_i^{(k)}$ never change for any $k$. By considering $k$ in the limit, we conclude
    \[
    \partial f_i(x)=\lim_{k\rightarrow \infty}\partial f_i^{(k)}(x)=\mathrm{conv}\{\alpha_i\ones_j-\beta_i\ones_{j'} + \delta_i\ones \mid j\in M_i^{(k)}, j'\in N_i^{(k)}\}.
    \]
    Since $i$ was arbitrary, we can do this for every point and take $\partial f(x)=\sum_i w_i \partial f_i(x)$, which is well-defined.
\end{proof}
Note that \cref{assumption:not_identically_infinite} is necessary to ensure sets $M_i$ and $N_i$ remain disjoint, which in turn guarantees the existence of some $\sigma^{\circ}$ satisfying the sequence $u^{(k)}$. In other words, the subdifferential of a tropical distance admits a canonical extension to points $V \subset (\R\cup \{\pm \infty\})^n$ so long as no points are identically $\mathbf{\infty}$ or $\mathbf{-\infty}$. We now have the following equivalent characterization of a tropical Fermat--Weber set.
\begin{definition}\label{def:fw_subdifferential}
    Let $\partial f(x)$ denote the tropical Fermat--Weber subdifferential evaluated at $x\in \R^n$, i.e., $f(x)=\sum_iw_i\gamma_{i}(v_i-x)$ for sample points $v_i\in (\R\cup \{\pm \infty\})^n$ satisfying \cref{assumption:not_identically_infinite}, $w_i>0$, and tropical gauge functions $\gamma_{\mathrm{tr}}^{\lambda_i}$. The \emph{tropical Fermat--Weber set} $\mathcal{X^*}$ is
    \[
    \mathcal{X^*}\coloneq \bigl\{x\in \TPT{n} : \zeros \in \partial f(x) \bigr\} .
    \]
\end{definition}
\cref{def:fw_subdifferential} offers the possibility of applying a subgradient-based method for finding an optimal $x^*\in \mathcal{X^*}$. This approach was proposed in \cite{sabol2025tropicalfermatweberpolytropes}, and we see that it extends quite naturally to the case of non-finite data. Nonetheless, we will pursue a strategy here based on network flows.

Select arbitrary $v_i\in V$ along with its gauge. We assume $\lambda_i \in (0,1)$, as the other cases follow from straightforward simplifications. Set 
\[
M^{\infty}_i \coloneq \{j : V_{ij}=+\infty\}\subsetneq [n], \quad N^{\infty}_i \coloneq \{j : V_{ij}=-\infty\}\subsetneq [n],
\]
either of which may be empty. For arbitrary $x\in\R^n$, if $M^{\infty}_i\neq\emptyset$, it must be the case that
\[
\argmin_j(x_j-V_{ij})=M^{\infty}_i, \qquad \text{and} \qquad \{\argmax_j(x_j-V_{ij})\cap M^{\infty}_i\}=\emptyset.
\]
The second statement follows from \cref{assumption:not_identically_infinite}. Consider a maximal cone $\sigma^\circ(p_i)$, where $p_i$ is an arbitrary vertex of $B^\circ_{\lambda_i}$, and recall that $\sigma^\circ(p_i)=S^{\max}_k(\zeros)\cap S^{\min}_l(\zeros)$ for some $k\neq l\in[n]$. It is obvious that if $k\in M^{\infty}_i$, then $\mathcal{C}_P=\emptyset$ must be empty for any family $P$ containing $p_i$. Otherwise, this would imply $k\in\argmax_j(x_j-V_{ij})$ for all $x\in \mathcal{C}_P$, a contradiction. Conversely, $\mathcal{C}_P$ can only be nonempty if $l \in M^{\infty}_i$ because otherwise it would imply $x_{l'}-V_{il'}<x_l-V_{il}=-\infty$ for some $l'\neq l$, which is impossible. If $N^{\infty}_i\neq \emptyset$, then symmetric arguments prohibit $l\in N^{\infty}_i$ for force $k \in N^{\infty}_i$.

Intuitively, our previous arguments correspond to the obvious fact that in a min-cost flow graph with arcs costs permitted to take on values $\pm\infty$, certain arcs are ``prohibited'' in any optimal flow. Equivalently, one could consider the corresponding primal constraints that are tight everywhere/nowhere for $x\in\R^n$ when allowing $\phi_i,\,\psi_i$ to take values in $\Rmax$.

Let $\mathcal{G}$ refer to the graph corresponding to the min-cost flow problem $(\mathrm{D})$ and $\mathcal{G}^-$ denote the subgraph of $\mathcal{G}$ that removes all arcs whose corresponding constraints are tight nowhere for $x\in \R^n$ (see \cref{fig:mcf_formulation}). Explicitly, $\mathcal{G}^-$ has arc set $\mathcal{A}^- = \mathcal{A}^-_{\max}\cup \mathcal{A}^-_{\min}$, where

\begin{equation}\label{eq:allowable_arcs}
    \begin{aligned}
    \mathcal{A}^-_{\max}
    &=
    \{(i,j) : V_{ij}=-\infty\}
    \cup
    \{(i,j) : \min_j V_{ij} > -\infty,\; V_{ij}<+\infty\},
    \\
    \mathcal{A}^-_{\min}
    &=
    \{(j,i') : V_{ij}=+\infty\}
    \cup
    \{(j,i') : \max_j V_{ij} < +\infty,\; V_{ij}>- \infty\}.
    \end{aligned}
\end{equation}

\begin{figure}
    \centering
    \centering
    \begin{tikzpicture}[node distance={30mm}, thick, main/.style = {draw, circle}]
        \draw [solid](0,2) to (1.5,2);
        \draw [draw=gray,dotted](0,2) to (1.5,1);
        \draw [draw=gray,dotted](0,2) to (1.5,0);
        \draw [draw=gray,dotted](0,1) to (1.5,2);
        \draw [solid](0,1) to (1.5,1);
        \draw [draw=gray,dotted](0,1) to (1.5,0);
        \draw [draw=gray,dotted](0,0) to (1.5,2);
        \draw [draw=gray,dotted](0,0) to (1.5,1);
        \draw [solid](0,0) to (1.5,0);

        \draw [draw=gray,dotted](3,2) to (1.5,2);
        \draw [solid](3,2) to (1.5,1);
        \draw [solid](3,2) to (1.5,0);
        \draw [solid](3,1) to (1.5,2);
        \draw [draw=gray,dotted](3,1) to (1.5,1);
        \draw [solid](3,1) to (1.5,0);
        \draw [solid](3,0) to (1.5,2);
        \draw [solid](3,0) to (1.5,1);
        \draw [draw=gray,dotted](3,0) to (1.5,0);

        \node at (0, 0)[circle,fill,inner sep=1.5pt]{};
        \node at (0, 1)[circle,fill,inner sep=1.5pt]{};
        \node at (0, 2)[circle,fill,inner sep=1.5pt]{};
        
        \node at (1.5, 0)[circle,draw=black,fill=white,inner sep=1.5pt]{};
        \node at (1.5, 1)[circle,draw=black,fill=white,inner sep=1.5pt]{};
        \node at (1.5, 2)[circle,draw=black,fill=white,inner sep=1.5pt]{};
        
        \node at (3, 0)[circle,fill,inner sep=1.5pt]{};
        \node at (3, 1)[circle,fill,inner sep=1.5pt]{};
        \node at (3, 2)[circle,fill,inner sep=1.5pt]{};
        
        \node[] at (-0.4, 0) {$\phi_3$};
        \node[] at (-0.4, 1) {$\phi_2$};
        \node[] at (-0.4, 2) {$\phi_1$};
        \node[] at (3.4, 0) {$\psi_3$};
        \node[] at (3.4, 1) {$\psi_2$};
        \node[] at (3.4, 2) {$\psi_1$};
        \node[] at (1.5, 2.5) {$\mathcal{G}^-$};
        \node[] at (0.75, -0.3) {$\mathcal{A}^-_{\max}$};
        \node[] at (2.25, -0.3) {$\mathcal{A}^-_{\min}$};
        \begin{scope}[xshift=6cm, yshift=0cm]
            \draw [gray, very thin, dashed](0,2) to [bend left=15](1.5,2);
            \draw [gray, very thin, dashed](0,1) to [bend left=15](1.5,1);
            \draw [gray, very thin, dashed](0,0) to [bend left=15](1.5,0);
            \draw [solid](0,2) to [bend right=15](1.5,2);
            \draw [solid](0,1) to [bend right=15](1.5,1);
            \draw [solid](0,0) to [bend right=15](1.5,0);
            
            \draw [solid](3,2) to [bend left=10](1.5,0);
            \draw [gray, very thin, dashed](3,2) to [bend right=10](1.5,0);
            \draw [solid](3,2) to [bend left=0](1.5,1);
            
            \draw [solid](3,1) to [bend left=10](1.5,2);
            \draw [gray, very thin, dashed](3,1) to [bend right=10](1.5,2);
            \draw [solid](3,1) to [bend left=0](1.5,0);
            
            \draw [solid](3,0) to [bend left=10] (1.5,1);
            \draw [gray, very thin, dashed](3,0) to [bend right=10] (1.5,1);
            \draw [solid](3,0) to [bend left=0] (1.5,2);
    
            \node at (0, 0)[circle,fill,inner sep=1.5pt]{};
            \node at (0, 1)[circle,fill,inner sep=1.5pt]{};
            \node at (0, 2)[circle,fill,inner sep=1.5pt]{};
            
            \node at (1.5, 0)[circle,draw=black,fill=white,inner sep=1.5pt]{};
            \node at (1.5, 1)[circle,draw=black,fill=white,inner sep=1.5pt]{};
            \node at (1.5, 2)[circle,draw=black,fill=white,inner sep=1.5pt]{};
            
            \node at (3, 0)[circle,fill,inner sep=1.5pt]{};
            \node at (3, 1)[circle,fill,inner sep=1.5pt]{};
            \node at (3, 2)[circle,fill,inner sep=1.5pt]{};
            
            \node[] at (-0.4, 0) {$\phi_3$};
            \node[] at (-0.4, 1) {$\phi_2$};
            \node[] at (-0.4, 2) {$\phi_1$};
            \node[] at (3.4, 0) {$\psi_3$};
            \node[] at (3.4, 1) {$\psi_2$};
            \node[] at (3.4, 2) {$\psi_1$};
            \node[] at (1.5, 2.5) {$\mathcal{G}^-_{\mathrm{res}}(\rho^*)$};
        \end{scope}
    \end{tikzpicture}
    \caption{On the left, the subgraph $\mathcal{G}^-$ of permissible arcs for $V$ and $\Lambda$ is in \cref{ex:running_example}. Dotted edges in the graph correspond to arcs that cannot carry positive flow. On the right, the residual graph corresponding to an optimal flow solution $\rho^*$. The dashed arcs represent the backwards arcs that are added to form the residual graph.}
    \label{fig:mcf_formulation}
\end{figure}

Note that while the graph $\mathcal{G}^{-}$ does not contain any arcs with weight $+\infty$, it still may contain arcs with weight $-\infty$. Thus, before we can solve $(\mathrm{MCF})$ over $\mathcal{G}^{-}$, we need to reassign real weights to any such arcs. The next lemma shows that this assignment may be arbitrary, and won't affect $\mathcal{X}^*$.

\begin{lemma}\label{lem:arbitrary_a}
    Let $c\in\R$ denote the reassigned weight of every arc in $\mathcal{G}^{-}$ whose original weight was $-\infty$, and let $\rho^*$ denote an arbitrary optimal flow of the problem $\mathrm{MCF}$ over $\mathcal{G}^{-}$. Then $D^*\in \Tmin^{n \times n}$, the Kleene star matrix associated with the $n$ ``$x$-nodes,'' i.e., the variables $x_1,\ldots,x_n$ in $(\mathrm{P})$, of $\mathcal{G}^{-}_{\mathrm{res}}(\rho^*)$ does not depend on $c$.
\end{lemma}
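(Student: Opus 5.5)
The plan is to track exactly where the reassigned weight $c$ can enter the Kleene star $D^*$ and to show that it always cancels.

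First I will pin down the structure of the arcs with original weight $-\infty$. By \cref{eq:allowable_arcs}, such an arc lies in $\mathcal{A}^-_{\max}$ (an arc $(i,j)$ with $V_{ij}=-\infty$, so its cost is $-V_{ij}=+\infty$ in the $\phi$-constraint reading) or in $\mathcal{A}^-_{\min}$ (an arc $(j,i')$ with $V_{ij}=+\infty$). The key observation is that these arcs are the \emph{only} permitted arcs incident to the corresponding $\phi_i$ (resp.\ $\psi_i$) node on that side whenever $N^\infty_i\neq\emptyset$ (resp.\ $M^\infty_i\neq\emptyset$). This follows from \cref{eq:allowable_arcs}: if $\min_j V_{ij}=-\infty$, the second set in $\mathcal{A}^-_{\max}$ is empty for that $i$. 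Hence the problematic arcs for sample $i$ all share the single endpoint $\phi_i$ (or $\psi_i$), and they all carry the same reassigned cost $c$.

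Next I will use the standard description of $D^*$. Its $(j,k)$ entry is the min-plus shortest-path length between $x$-nodes $j$ and $k$ in $\mathcal{G}^-_{\mathrm{res}}(\rho^*)$, with reduced or residual costs as in \cref{rem:total_unimodularity}. Any path that uses one of the $c$-arcs must pass through the hub $\phi_i$. Since $\phi_i$ has no other allowed arcs on that side, the path enters $\phi_i$ through one arc of the group and leaves through another arc of the same group, one forward and one backward residual arc. The two contributions are $+c$ and $-c$ (up to the common finite parts), so they cancel. Paths avoiding the hub do not involve $c$ at all. Thus every path length between $x$-nodes, and therefore every entry of $D^*$, is independent of $c$.

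It remains to handle the dependence of $\rho^*$ itself on $c$. For this I will note that the flow-conservation constraint at $\phi_i$ forces the total flow through the group to equal the fixed supply ($-\beta_i^w$ or $\alpha_i^w$). The cost of the group is therefore $c$ times a constant, so changing $c$ shifts the objective by a constant and leaves the set of optimal flows unchanged. Consequently the residual graph's arc structure does not depend on $c$.

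The main obstacle is making the cancellation rigorous. The difficulty is that residual arcs at the hub might exist only in one direction, for example when an arc in the group carries zero flow. In that case a path could enter through a forward $c$-arc but have no backward $c$-arc available for leaving. I will handle this by arguing that, because the only other neighbors of $\phi_i$ lie on the opposite side of the bipartite structure, any path reaching $\phi_i$ from an $x$-node via a $c$-arc must return to an $x$-node via another $c$-arc. If the hub sits in the middle layer instead, I will show such paths are never needed, using the primal-side identity $\phi_i = \max_{j}(x_j - V_{ij})$ restricted to the allowed arcs.
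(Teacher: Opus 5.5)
Your proposal is correct and follows essentially the paper's argument: at a hub $\phi_i$ (or $\psi_i$) whose row has an infinite coordinate, every permitted arc carries the reassigned weight, so any $x$-to-$x$ residual path crosses that hub through one backward and one forward arc, and their weights $-c$ and $+c$ cancel. Your extra step adds something the paper leaves implicit (it only notes that feasibility is unaffected): flow conservation makes the $c$-arcs contribute the constant $c\beta_i^w$ (resp.\ $c\alpha_i^w$), so the set of optimal flows is itself independent of $c$. Two small corrections: these arcs have original cost $V_{ij}=-\infty$ (not $+\infty$), and your final ``obstacle'' is vacuous, since $\phi_i$ can only be entered by a backward arc and left by a forward arc, with the roles reversed for $\psi_i$.
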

\begin{proof}
    Clearly, the value of $c$ does not impact the feasibility of $\mathrm{MCF}$. Thus, assume $\mathrm{MCF}$ over $\mathcal{G}^{-}$ is feasible as otherwise there is nothing to prove. Select arbitrary indices $s,t\in[n]$. We want to find the shortest-path distance between $x_{s}\rightarrow x_{t}$ in $\mathcal{G}^-_{\mathrm{res}}(\rho^*)$. Observe that any path must have even length since $\mathcal{G}^-$ is bipartite. Let $e$ be any arc leaving node $x_{s}$, and $w_e$ its associated weight. Write $e=(s,i)$ if $e$ is a backwards arc (i.e., $\rho^*_e>0$) and $e=(s,k)$ otherwise. Suppose, $e=(s,i)$. There are two possibilities: $V_{is}=-\infty$ (which implies $w_e=-c$ due to the negation of edge weights when constructing the residual graph), or $V_{is}\in\R$. Consider $e'$, the next edge selected in the path. If $V_{is}=-\infty$, then we must have $w_{e'}=c$ based on how we constructed $\mathcal{G}^-$. Thus, $w_e+w_{e'}=-c+c=0$. Conversely, if $V_{is}\in \R$, then $w_{e'}$ is independent of $c$ because $e'$ did not have its weight reassigned when constructing $\mathcal{G}^-$. The same arguments apply for $e=(s,k)$ if we select a forward arc initially. Since any optimal path $x_s\rightarrow x_t$ must have finite length, we can apply the previous arguments inductively and conclude that in any such path, the total path weight contains no $c$ terms due to cancellation.
\end{proof}

\Cref{lem:arbitrary_a} shows that edge weights need to be uniformly updated only across edges sharing a common head in $\phi_i$ or a common tail in $\psi_i$. Still, there is nothing gained or lost by updating all non-finite edge weights to the same value.

\begin{theorem}
    Let  $V=\{v_1,\ldots,v_m\}$ where $v_i\in (\R\cup \{\pm \infty\})^n$ for $i\in [m]$ subject to \cref{assumption:not_identically_infinite}. Any optimal solution to $\mathrm{MCF}$ applied over $\mathcal{G}^{-}$ yields an optimal solution to $\tfw$.
\end{theorem}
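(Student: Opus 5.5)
The plan is to verify optimality through \cref{def:fw_subdifferential}: we show that any primal point recovered from an optimal flow on $\mathcal{G}^-$ satisfies $\zeros\in\partial f(x^*)$, using the explicit form of the subdifferential in \cref{lem:subdiff_defined}.

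First, fix the real reassignment $c$ of all $-\infty$ arc weights. This turns $\mathrm{MCF}$ over $\mathcal{G}^-$ into an ordinary min-cost flow problem with finite costs, whose LP dual is a restricted version of $(\mathrm{P})$. In that restricted problem the constraints in \cref{eq:primal_constraints} are kept only for the arcs in $\mathcal{A}^-$, and the data entries $V_{ij}=-\infty$ are replaced by finite surrogates.

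Let $\rho^*$ be an optimal flow. As in \cref{rem:total_unimodularity}, take a primal point $x^*$ from the path closure (the Kleene star $D^*$) of $\mathcal{G}^-_{\mathrm{res}}(\rho^*)$. By \cref{lem:arbitrary_a}, the set of such $x$-vectors does not depend on $c$. Complementary slackness then says that every arc carrying positive flow corresponds to a tight constraint. Concretely, $\rho^*_{(i,j)}>0$ forces $x^*_j-V_{ij}=\phi_i$, and $\rho^*_{(j,i')}>0$ forces $x^*_j-V_{ij}=\psi_i$.

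Next, translate the flow into subgradients. Flow conservation at node $\phi_i$ gives a total outflow of $\beta_i^w$, and at node $\psi_i$ a total inflow of $\alpha_i^w$ (after the sign conventions of $b$). Flow conservation at each $x_j$ node, with supply $\delta^w_{tot}$ handled as in the finite case, then reads
\[
\sum_i\bigl(\rho^*_{(j,i')}-\rho^*_{(i,j)}\bigr)+\delta^w_{tot}=0 .
\]
Normalizing the flow on the arcs incident to $\phi_i$ and $\psi_i$ gives convex weights over indices $j'$ and $j$. Hence the contribution of sample $i$ is $w_i p_i$ with
\[
p_i\in\mathrm{conv}\bigl(\{\alpha_i\ones_j-\beta_i\ones_{j'}+\delta_i\ones\}\bigr),
\]
where $j$ ranges over the heads of positive-flow $\min$-arcs and $j'$ over the tails of positive-flow $\max$-arcs. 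Summing over $i$ gives $\sum_iw_ip_i=\zeros$.

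It remains to show that these index sets lie in $M_i$ and $N_i$ of \cref{lem:subdiff_defined}, evaluated at $x^*$ with the original, non-finite data. Then $\zeros\in\partial f(x^*)$ and $x^*\in\mathcal{X}^*$.

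The main obstacle is this identification step. For finite rows, tightness together with feasibility ($\phi_i\ge x_j-V_{ij}$ on the permitted arcs) gives the membership directly. For rows with infinite entries, we use the structure of $\mathcal{A}^-$ in \cref{eq:allowable_arcs}.

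Suppose $N_i^\infty\neq\emptyset$. Then the only permitted $\max$-arcs at $\phi_i$ go to indices $j\in N_i^\infty$, which are exactly $\argmax_j(x_j-V_{ij})$ for every finite $x$. The $\min$-arcs are the finite indices with $V_{ij}<\infty$. I will argue that the surrogate value $c$ makes the tightness pattern among these indices coincide with the true $\argmin$ over finite entries. This is where the cancellation argument of \cref{lem:arbitrary_a} must be reused: the $\phi_i$ variable effectively absorbs $c$ and imposes no coupling. The case $M_i^\infty\neq\emptyset$ is handled symmetrically. Rows with both kinds of infinite entries force both arc sets onto the infinite indices, consistent with the argmax and argmin sets, and \cref{assumption:not_identically_infinite} guarantees these sets are nonempty and disjoint.

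Finally, I note that the cases $\lambda_i\in\{0,1\}$ are handled by deleting the zero-capacity side.
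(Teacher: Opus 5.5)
Your proposal follows essentially the paper's route: define $p_i$ from the optimal flow, use flow conservation at the $x$-nodes to get $\sum_i w_ip_i=\zeros$, use complementary slackness at the recovered potentials to place each $p_i$ in the subdifferential of \cref{lem:subdiff_defined}, and conclude via \cref{def:fw_subdifferential}; the paper additionally invokes \cref{thm:Durier}, which your direct subgradient check makes unnecessary. The step you flag as the main obstacle is in fact immediate and needs no cancellation argument from \cref{lem:arbitrary_a}: by \cref{eq:allowable_arcs}, on each side of row $i$ the permitted arcs either all meet $N_i^\infty$ (resp.\ $M_i^\infty$), which is the true $\argmax_j(x_j-V_{ij})$ (resp.\ $\argmin_j(x_j-V_{ij})$) for every finite $x$ and which carry the only surrogate costs, or they are exactly the finite-entry indices with finite costs, in which case the infinite entries can never attain the extremum and your finite-row argument (feasibility plus tightness at the recovered potentials) applies verbatim.
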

\begin{proof}
    Let $\rho^*$ denote an optimal solution to $\mathrm{MCF}$ applied over $\mathcal{G}^-$, which we assume exists. 
    Write $p_i$ as the vector whose $j$-th component is given by
    \[
    \frac{\rho^*_{ji'}}{w_i}-\frac{\rho^*_{ij}}{w_i}+(2\lambda_i-1).
    \]
    First, note that $\sum_j(p_i)_j=\alpha_i-\beta_i+n\delta_i=0$, which is necessary for $p_i\in B^\circ_{\lambda_i}$, and which follows from the fact that $\sum_j\rho^*_{ij}=\beta_i^w$ and $\sum_j\rho^*_{ji'}=\alpha_i^w$ by balance of flow constraints over the nodes $\phi_i$ and $\psi_i$ respectively. 
    By construction, $p_i\in \partial \gamma_{\mathrm{tr}}^{\lambda_i}(x)$ for any points in the elementary convex set defined by $\rho^*$. That $\rho^*$ defines such a (non-empty) region follows from strong duality and the equivalence between tropical sectors and the translated cones of the normal fan to the polar dual. 
    Indeed, for any such $x$, it is easy to verify that $\langle p_i,x \rangle=\gamma_{\mathrm{tr}}^{\lambda_i}(v_i-x)$. Alternatively, one could note that by optimality of $\rho^*$, any $j$ such that $\rho^*_{ij}>0$ must have $j\in M_i$ (where $M_i$ is defined as in \cref{lem:subdiff_defined}), and similarly, $j\in N_i$ for any $\rho^*_{ji'}>0$, and thus conclude that 
    \[
    p_i\in \mathrm{conv}(\{\alpha_i\ones_j-\beta_i\ones_{j'}+\delta_i\ones : j \in M_i, j' \in N_i\}).
    \]
    In any case, the important thing is that
    \[
    \biggl(\sum_iw_ip_i\biggr)_j=\sum_i \bigl (\rho^*_{ji'}-\rho^*_{ij} +w_i\delta_i\bigr)=-\delta^w_{tot}+\sum_i w_i\delta_i = 0,
    \]
    in every component $j\in [n]$, i.e., $\sum_iw_ip_i=\zeros$. By \cref{thm:Durier}, our elementary convex set $\mathcal{C}_p$ defined by $P=(p_i)_{i\in[m]}$ is optimal. Since $\sum_iw_ip_i\in\partial f(x)$ (by the definition in \cref{lem:subdiff_defined}), then any $x^*\in \mathcal{C}_p$ is an optimal solution to $\tfw$ according to \cref{def:fw_subdifferential}.
\end{proof}

It is important to note that in contrast with cases where $V\subset\TPT{n}$ (for which a finite Fermat--Weber solution always exists), non-finite data can result in $\mathcal{X}^*\cap \R^n=\emptyset$. This corresponds to situations where the min-cost flow problem over $\mathcal{G}^-$ is infeasible. One can determine feasibility of the min-cost flow problem by solving a max flow ($\mathrm{MF}$) problem over $\mathcal{G}^{-}$ in the standard manner, see \cite{AMO_NetworkFlows}.

Of course, one could consider simply replacing $\infty$ (resp. $-\infty$) elements of $V$ by sufficiently large (resp. small) real numbers and then solving $\mathrm{TFW}$ as in the finite case. If the original problem is feasible, then this method will return the correct solution. Solving the restricted problem over $\mathcal{G}^{-}$, however, identifies infeasibility without additional checks required, and besides, solving $\mathrm{MCF}$ over a sparser graph $\mathcal{G}^{-}$ is generally preferable to a denser one.

\section{Inverse Tropical Fermat--Weber Problems}\label{sec:inverse}

Given an optimization problem and an initial feasible solution $x^0$, the \emph{inverse problem} asks to find a new coefficient vector for the objective function such that $x^0$ is optimal in the modified problem. Inverse optimization problems have roots in the geophysics community \cite{Tarantola2005}, and appear to have gained attention in the 1990's with work on inverse linear programming and combinatorial optimization problems \cite{ZhangLiu1996, AhujaOrlin2001, Heuberger2004}. 

Inverse location problems over trees and general networks were studied in \cite{BermanIngcoOdoni1992,BermanIngcoOdoni1994} where the authors consider ``reductions'' (modifications that maintain the current network configuration) as well as ``additions'' (adding entirely new arcs to the network). In \cite{BurkardPleschiutschnigZhang2004}, the authors consider (inverses of) \emph{$p$-median} problems with both positive and negative weights. Allowing for negatively weighted sites (sometimes called \emph{obnoxious facilities}) generalizes the Fermat--Weber (resp. $p$-median) problem to a broader class of attraction/repulsion problems sometimes referred to as the \emph{extended Fermat--Weber problem}. Problems of this type were explored \cite{Plastria1991MajorityFermatWeber, Plastria1996}. See also \cite{ChurchDrezner2022} for a more recent survey. In this section, however, we will assume non-negative weights only.

The \emph{inverse tropical Fermat-Weber problem} (ITFW) can be stated as follows. For a tropical Fermat--Weber problem $\tfw(V,\Lambda,w)$, and feasible primal solution $x^0 \in \TPT{n}$, find weights $w'\in \R^m_{\geq0}$ that solve the following optimization problem:
\begin{equation*}
    \mathrm{ITFW}(V,\Lambda,w,x^0): \quad \underset{w'\in \R^m_{\geq0}}{\text{minimize}}\,\, f(w')\\
\quad \text{subject to} 
\quad x^0 \in \tfw(V,\Lambda,w')^*,
\end{equation*}
where $f(w')$ is a function that penalizes the degree to which one deviates from the original weight vector $w$. Typical examples include $f(w')=\lVert w'-w \rVert$ for some norm $\lVert \cdot \rVert$ (often $\ell^1$ or $\ell^\infty$) of the objective.

\subsection{Inverse Feasibility Problem}

It will first be helpful to consider some conditions under which feasible solutions to our inverse problem exist. For example, a completely trivial solution, which we do not exclude in $\mathrm{ITFW}$, can be given by $w'=\zeros$. That is, our problem is \emph{always} feasible in the strictest sense. This ends up being harmless, as any \emph{reasonable}\footnote{In particular, any norm.} penalty function $f(w')$ will avoid the trivial solution unless this is the \emph{only} feasible solution. Thus, we can regard an ``optimal'' trivial solution as an indication of infeasibility in the stricter sense. Thus, when we say ``feasible solution'' in the sequel, the reader can assume we mean $w'\neq \zeros$.

We begin by reviewing some known results. Consider the possibility $x^0\in \tilde{v_i}$ for some $v_i\in V$, that is, $x^0=v_i+c\ones$ for some $c\in\R$. Then any $w'$ such that 
\[
w_i\geq\sum_{j\in [m]\setminus \{i\}}w_j,
\]
provides a feasible solution. For weights $w'$ so applied, point $v_i$ is said to \emph{hold a majority} in $V$, see \cite{Plastria1991MajorityFermatWeber}. 

If our problem is fully asymmetric, i.e., $\Lambda=\zeros$ or $\Lambda=\ones$, then a point $x^0$ is feasible \emph{if and only if} $x^0\in \mathrm{tconv}(V)$ where $\mathrm{tconv}(V)$ is the \emph{tropical convex hull}\footnote{$\Lambda$ implies which version of tropical convex hull is appropriate. For $\Lambda=\zeros$, the min-plus tropical convex hull is used, and for $\Lambda=\ones$ the max-plus tropical convex hull is the appropriate one.} of $V$. 
\begin{definition}[Tropical Convex Hull, \cite{Joswig2021Essentials}]
    For a subset $V \subset \Rmin^n$, the min-plus \emph{tropical convex hull}
    \[
    \mathrm{tconv}_{\mathrm{min}}(V) \coloneq \{c_1 \odot v_1 \oplus \dots \oplus c_m \odot v_m \mid c_i \in \R, v_i \in V\}
    \]
    is the smallest tropically convex subset containing $V$. The set $\mathrm{tconv}(V)$ is called a \emph{tropical polytope} if it admits a finite generating set.
\end{definition}
That $x^0\in \mathrm{tconv}_{\mathrm{min}}(V)$ is sufficient for $x^0$ to be feasible in $\mathrm{ITFW}$ (for $\Lambda=\zeros$) is a result that first appeared in \cite{CoxCuriel2023}, who showed (Theorem 3.3) that for any covector cell $C\subseteq\mathrm{tconv}_{\mathrm{min}}(V)$ there exists a choice of real \emph{positive} weights $w'$ such that $C=\tfw(V,\Lambda,w')^*$. The proof in \cite{CoxCuriel2023} relies on a correspondence between subsets of vertices of a product of simplicies $\Delta^{m-1}\times\Delta^{n-1}$ and bipartite graphs, allowing the authors to provide an explicit formula for $w'\in \R^n_{>0}$. 

\begin{theorem}\label{thm:Minkowski_Weyl}
    For $x^0\in \TPT{n}$, $V=\{v_1,\ldots,v_m\}\subset \TPT{n}$, and $\Lambda=\zeros$, there exists a vector of positive weights $w'\in \R^m_{> 0}$ such that $x^0 \in \tfw(V,\Lambda,w')$ \emph{if and only if} $x^0 \in \mathrm{tconv_{min}}(V)$. Similarly, if $\Lambda=\ones$ then there exists a vector of non-negative weights $w'\in \R^m_{\geq 0}$ such that $x^0 \in \tfw(V,\Lambda,w')$ \emph{if and only if} $x^0 \in \mathrm{tconv_{max}}(V)$.
\end{theorem}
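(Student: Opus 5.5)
With $\Lambda=\zeros$ we have $\beta_i=0$, $\alpha_i=n$ and $\delta_i=-1$, so each gauge is $\gamma_{\mathrm{tr}}^{0}(v_i-x)=n\max_j(x_j-V_{ij})-\sum_j(x_j-V_{ij})$. By \cref{lem:subdiff_defined}, its subdifferential at $x$ is
\[
\mathrm{conv}\bigl(\{n\ones_j-\ones : j\in M_i\}\bigr),
\]
where $M_i$ is the argmax set of the $\max$ term for $v_i$ at $x$ (the paper's sign convention for this set has to be fixed; what matters is that $M_i$ picks out the sector of the min-plus hyperplane with apex $v_i$ that contains $x$). The plan is to turn the condition $\zeros\in\partial f(x^0)$ (\cref{def:fw_subdifferential}) into a combinatorial statement about the covector of $x^0$. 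The condition reads $\sum_i w_i'(nq_i-\ones)=\zeros$, with $q_i$ a probability vector supported on $M_i$. Writing $\mu_{ij}=w_i'(q_i)_j\ge 0$, it becomes
\[
\sum_i\mu_{ij}=\tfrac1n\sum_i w_i' \quad\text{for every } j,\qquad \mu_{ij}>0\Rightarrow j\in M_i.
\]
Optimality is therefore equivalent to the bipartite "covector graph" $G(x^0)$, with edges $(i,j)$ for $j\in M_i$, admitting a nonnegative flow that is uniform on the coordinate side. By \cref{thm:Durier}, or simply convexity of $f$, this is equivalent to $x^0\in\tfw(V,\zeros,w')^*$.

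For the forward direction, take $w'>0$ (indeed $w'\ge0$, $w'\neq\zeros$ suffices) that makes $x^0$ optimal. The equal column sums force every coordinate $j\in[n]$ to be incident to some edge of $G(x^0)$, i.e.\ $\bigcup_i M_i=[n]$. By the standard characterization of tropical polytopes via covectors (Develin--Sturmfels/Joswig), $x^0\in\mathrm{tconv}_{\min}(V)$ exactly when every sector index is attained by some generator, i.e.\ the covector of $x^0$ has no empty coordinate. I would cite this characterization from \cite{Joswig2021Essentials}, after checking that the $\argmax$ orientation in $M_i$ matches the min-plus convention; if it does not, I would switch to $-V$ and the opposite convention.

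For the converse, assume every $j$ lies in some $M_i$. I need positive weights $w'_i$ together with distributions $q_i$ on $M_i$ such that the column sums $\sum_i w'_i(q_i)_j$ are all equal. The plan is to choose, for each $j$, one row $i(j)$ with $j\in M_{i(j)}$, and also to let every row $i$ pick some $j\in M_i$, which is possible since $M_i\neq\emptyset$. This gives an edge set $E$ covering both sides of $G(x^0)$. I would put mass $1$ on each edge of $E$, obtaining $\mu$ with positive row sums and column sums $c_j\ge1$. To equalize the columns, I add extra mass $C-c_j$ on the edge $(i(j),j)$, where $C=\max_j c_j$. Then $\mu\ge0$, every column sum equals $C$, and every row sum $w_i'=\sum_j\mu_{ij}>0$, so $w'\in\R^m_{>0}$ and the condition above holds with $\frac1n\sum_iw_i'=C$. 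This is exactly the "explicit formula" flavour of \cite{CoxCuriel2023}, and the approach avoids any product-of-simplices machinery.

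The $\Lambda=\ones$ case follows by the symmetry $x\mapsto -x$, which swaps $\min$ and $\max$ and exchanges $\dtr^{1}$ with $\dtr^0$ after reversing arguments. Since the statement there only asks for nonnegative weights, the converse needs nothing more than the construction above. The main obstacle I expect is the bookkeeping of conventions: matching the sign inside $M_i$ and the direction of the distance in \cref{eq:TFW} with the min-plus convex hull rather than the max-plus one, and confirming that the covector criterion for membership in $\mathrm{tconv}$ is stated for the same orientation. I would verify this first on \cref{ex:running_example} before writing out the general argument.
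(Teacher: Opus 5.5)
Your argument is correct, and it takes a different route from the paper's. The paper gets sufficiency only by citing \cite[Theorem 3.3]{CoxCuriel2023}. It gets necessity geometrically: $f$ is coercive, so $\mathcal{X}^*$ is bounded; implicitly using that $\mathcal{X}^*$ is a cell, it then lies in a bounded cell of $\mathscr{C}(V,\zeros)$, and these cells make up $\mathrm{tconv}_{\min}(V)$. You instead reduce both directions to one transportation condition on the covector graph.

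\begin{itemize}
\item Necessity becomes the observation that equal, positive column sums leave no coordinate uncovered. This needs neither coercivity nor \cref{thm:Durier}, and it works verbatim for $w'\geq\zeros$, $w'\neq\zeros$.
\item Sufficiency becomes your explicit construction, which checks out and gives strictly positive weights.
\end{itemize}

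The citation gives the paper a stronger fact it does not need: every covector cell of the hull is the \emph{entire} Fermat--Weber set for suitable positive weights. Your route gives a short, self-contained proof instead. You can even drop the Develin--Sturmfels citation: choosing $c_i=\max_k(x_k-V_{ik})$ shows directly that $x\in\mathrm{tconv}_{\min}(V)$ if and only if $\bigcup_i\argmax_k(x_k-V_{ik})=[n]$.

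That identity also settles the orientation question you left open. Your displayed formula is really $\gamma^0_{\mathrm{tr}}(x-v_i)=\dtr^0(v_i,x)$, not $\gamma^0_{\mathrm{tr}}(v_i-x)$. It gives $M_i=\argmax_j(x_j-V_{ij})$, i.e.\ $x\in S^{\max}_j(-v_i)$, which is a sector of the \emph{max}-plus hyperplane with apex $v_i$. So your parenthetical ``min-plus'' is backwards.

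This is exactly the orientation in which the statement holds. It is also the one the paper uses implicitly, through the translated fans $v_i+\mathcal{N}_i$ and the remark identifying $\mathscr{C}(V,\zeros)$ with the max-plus arrangement.

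If you read \cref{eq:TFW} literally as $\sum_iw_i\dtr^0(x,v_i)$, then $M_i=\argmin_j(x_j-V_{ij})$, and your argument correctly outputs $\mathrm{tconv}_{\max}(V)$. Your fallback of passing to $-V$ cannot repair this. Take $v_1=(0,0,0)$, $v_2=(0,2,1)$: the point $x^0=(0,1,1)$ lies in $\mathrm{tconv}_{\min}(V)$, yet under that reading it is optimal for no nonzero weights. So you should fix the convention $f(x)=\sum_iw_i\gamma_i(x-v_i)$, not the data.

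For $\Lambda=\ones$, negation alone suffices, since $\dtr^1(-x,-y)=\dtr^0(x,y)$; no reversal of arguments is needed.
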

\begin{proof}
    Consider $\Lambda=\zeros$. Sufficiency is given by \cite[Theorem 3.3]{CoxCuriel2023}. Necessity follows from the fact that $\mathrm{tconv}_{\mathrm{min}}(V)$ is precisely the union of bounded cells of $\mathscr{C}(V,\zeros)$. It is easy to show $f(x)$ is coercive, and thus, solutions must be bounded. The case when $\Lambda=\ones$ follows symmetrically when considering $\mathrm{tconv}_{\mathrm{max}}(V)$.
\end{proof}

Tropical convex hull arguments used in the asymmetric setting do not generally extend to the case of mixed/hybrid gauges. Indeed, some tropical hyperplanes may be entirely missing from the decomposition given by $\mathscr{C}(V,\Lambda)$. Even so, since tropical convex hulls are exactly the support of bounded covector cells in the covector decomposition \cite{DevelinSturmfelsTropConvexity}, one might expect that by taking the union of bounded cell in $\mathscr{C}(V,\Lambda)$ we could arrive at a similar result. This is not the case, however, as can be shown even in the simple case of two points in $\TPT{3}$. \cref{fig:skew_gage_example} depicts the setup for several choices of bias, where the (closed) shaded region corresponds to the set of all possible tropical Fermat--Weber points given arbitrary non-negative initial weights. Intuitively, these sets also correspond to pairwise geodesics for paths
\begin{align*}
[u, v]_\lambda \coloneq & \argmin_x\{x\in \TPT{n} : \dtr^\lambda(u,x)+\dtr^\lambda(x,v)\}\\
 =& \argmin_x\{x\in \TPT{n} : \dtr^{1-\lambda}(x,u)+\dtr^\lambda(x,v)\}
\end{align*}
for tropical $\ell^{1,\infty}$ norms given specific values of $\lambda$ (as indicated below the figures). That such geodesic paths correspond to intersections of min-plus and max-plus tropical sectors was shown in \cite[Lemma 7]{Comaneci2024Convexity} specifically for $\lambda=0$. This is equivalent to a definition
\[
[u, v]_{\lambda=0} \coloneq \{ x\in \TPT{n} : x\in u+\sigma, v\in x+\sigma \},
\]
for some normal cone $\sigma\in B^\circ$ of the polar dual.
On both the far left ($\lambda=0$) and far right ($\lambda=1$) figures, neither the min-plus nor max-plus tropical convex hull contains the set of feasible solutions. Conversely, in the middle figure, $\lambda \in(0,1)$, there are four bounded cells for which no non-trivial solutions exist in the inverse problem.

Clearly, if $x^0\in[v_i,v_j]_\lambda$ for some pair $v_i,v_j\in V$, then setting $w'=c\ones_{\{i,j\}}$ for any $c>0$ results in $x^0\in \tfw(V,\Lambda,w')^*$. But there are also examples when $x^0$ is feasible with strictly positive $w'$ such that $x^0$ is not contained in any pairwise geodesic path (see \cref{fig:geodesic_counterexample}).

\begin{figure}
    \centering
    \begin{tikzpicture}%
    \fill[{gray!20}] (0,0) -- (1,1) -- (2,1) -- (1,0);
    \draw [-,](0,0) -- (0,1.5);
    \draw [-,](0,0) -- (2.5,0);
    \draw [-,](0,0) -- (-0.8,-0.8);
    \draw [-,dashed](0,0) -- (0,-1.5);
    \draw [-,dashed](0,0) -- (-0.8,0);
    \draw [-,dashed](0,0) -- (2.5,2.5);
    \draw [-,](2,1) -- (2,2.5);
    \draw [-,](2,1) -- (3,1);
    \draw [-,](2,1) -- (-0.5,-1.5);
    \draw [-,dashed](2,1) -- (2,-0.5);
    \draw [-,dashed](2,1) -- (-0.8,1);
    \draw [-,dashed](2,1) -- (2.8,1.8);
    \node at (0,0)[circle,draw=black,fill=white,inner sep=1.5pt]{};
    \node at (2,1)[circle,draw=black,fill=white,inner sep=1.5pt]{};
    \node at (-0.3,0.2) {\small$u$};
    \node at (2.3,0.8) {\small$v$};
    \node at (1.25,-1) {\small$\lambda\in(0,1)$};
    \begin{scope}[xshift=4.3cm]
        \fill[{gray!20}] (0,0) -- (0,1) -- (2,1) -- (2,0);
        \draw [-,](0,0) -- (0,1.5);
        \draw [-,](0,0) -- (2.5,0);
        \draw [-,](0,0) -- (-0.8,-0.8);
        \draw [-,dashed](2,1) -- (2,-0.5);
        \draw [-,dashed](2,1) -- (-0.8,1);
        \draw [-,dashed](2,1) -- (2.8,1.8);
        \node at (0,0)[circle,draw=black,fill=white,inner sep=1.5pt]{};
        \node at (2,1)[circle,draw=black,fill=white,inner sep=1.5pt]{};
        \node at (-0.3,0.2) {\small$u$};
        \node at (2.3,0.8) {\small$v$};
        \node at (1.25,-1) {\small$\lambda=1$};
    \end{scope}
    \begin{scope}[xshift=-4.3cm]
        \fill[{gray!20}] (0,0) -- (0,-1) -- (2,1) -- (2,2);
        \draw [-,dashed](0,0) -- (0,-1.5);
        \draw [-,dashed](0,0) -- (-0.8,0);
        \draw [-,dashed](0,0) -- (2.5,2.5);
        \draw [-,](2,1) -- (2,2.5);
        \draw [-,](2,1) -- (3,1);
        \draw [-,](2,1) -- (-0.5,-1.5);
        \node at (0,0)[circle,draw=black,fill=white,inner sep=1.5pt]{};
        \node at (2,1)[circle,draw=black,fill=white,inner sep=1.5pt]{};
        \node at (-0.3,0.2) {\small$u$};
        \node at (2.3,0.8) {\small$v$};
        \node at (1.25,-1) {\small$\lambda=0$};
    \end{scope}
    
    \end{tikzpicture}
    \caption{Sets of geodesic paths $[u,v]_\lambda$ for various $\lambda$. Tropical min-plus hyperplanes are drawn with solid rays, and max-plus hyperplanes with dashed rays. Note that for $V=\{u,v\}$ and $\Lambda=(1-\lambda,\lambda)$, the hyperplane arrangements depict $\mathscr{C}(V,\Lambda)$ for an associated (unweighted) tropical Fermat--Weber problem.}
    \label{fig:skew_gage_example}
\end{figure}

\begin{remark}
    In \cite{Comaneci2024Convexity}, a general setup according to 
    \[
    h(x)=g\bigl(f_1(x),\ldots,f_m(x) \bigr)
    \]
    where $h:\TPT{n}\rightarrow\R$ is the objective function, $g:\R^m_{\geq 0}\rightarrow\R$ is a scalarization of the multi-objectives $f_i$, and $f_i(x)=\overline{\gamma}_i(x-v_i)$ is a gauge $\gamma_i$ that can vary over $i\in [m]$. In particular, the author claims (Theorem 18) that if $g(x)$ is strictly increasing, and at least one $f_i(x)$ is strictly $\triangle$-star-quasiconvex, then all minima of $h(x)$ are contained in $\mathrm{tconv}_{\max}(v_1,\ldots,v_m)$. If this were true, then since $g(x)=\sum_iw_if_i(x)$ for $w_i>0$ is a strictly increasing function on $\R^n_{\geq0}$, then \cref{thm:Minkowski_Weyl} above could extend to any tropical Fermat--Weber problem containing at least one $\lambda_i\neq1/2$, since the associated gauge is strictly $\triangle$-star-quasiconvex by \cref{lem:star_quasiconvexity}. Unfortunately, this is not the case, as \cref{ex:counter_example} shows.
\end{remark}

\begin{example}\label{ex:counter_example}
    Take $P=(p_1,p_2,p_3)$ with $p_1=(-1,2,-1)$ and $p_2=p_3=(1,-2,1)$. Note that these choices define a region given by $\mathcal{C}_P=\bigcap_{i=1}^3 v_i+\sigma^\circ(p_i)$, which by \cref{eq:polar_cone} corresponds to 
    \[
    \mathcal{C}_P=S_2^{\max}(-v_1)\cap S_2^{\min}(-v_2)\cap S_2^{\min}(-v_3).
    \]
    Since $\sum_iw_i'p_i=0$, then \cref{thm:Durier} implies $\mathcal{C}_P=\mathcal{X}^*$ is optimal so long as the intersection is non-empty. Thus, $\mathcal{X}^*\nsubseteq \mathrm{tconv}_{\max}(V)$. Conversely, take $x^0\in \mathrm{relint}(C)$ as a point in the relative interior of the tropical convex hull. Note, the cell $C$ corresponds to the family $P$ as before, but with update $p_2= (1,1,-2)$. Noting that $p_3=-p_1$, it can easily be shown that $\sum_iw_i'p_i=0$ \emph{if and only if} $w'=(t,0,t)$ for $t\geq0$. Thus, $x^0$ is \emph{never feasible} unless we allow zero weights for $w'$. 
    \begin{figure}
    \centering
    \begin{tikzpicture}%
    \fill[{gray!20}] (0,1) -- (2,3) -- (2,2) -- (0,0);
    \fill[{red!20}] (1,1) -- (2,2) -- (2,1);
    \draw [-,dashed](0,1) -- (0,-0.3);
    \draw [-,dashed](0,1) -- (-0.8,1);
    \draw [-,dashed](0,1) -- (2.5,3.5);
    
    \draw [-,dashed](2,2) -- (2,2.5);
    \draw [-,dashed](2,2) -- (3,2);
    \draw [-,dashed](2,2) -- (-0.3,-0.3);

    \draw [-,dashed](2,0) -- (2,3.5);
    \draw [-,dashed](2,0) -- (3,0);
    \draw [-,dashed](2,0) -- (1.6,-0.4);

    \draw [-,red,thick](0,1) -- (1,1) -- (2,2);
    \draw [-,red,thick](0,1) -- (2,1) -- (2,0);
    \draw [-,red,thick](2,2) -- (2,0);
    
    \node at (0,1)[circle,draw=black,fill=white,inner sep=1.5pt]{};
    \node at (2,2)[circle,draw=black,fill=white,inner sep=1.5pt]{};
    \node at (2,0)[circle,draw=black,fill=white,inner sep=1.5pt]{};

    \node at (-0.2,1.2) {\small$v_2$};
    \node at (2.3,1.8) {\small$v_1$};
    \node at (2.3,-0.2) {\small$v_3$};
    \node at (1.2,1.7) {\small$\mathcal{X}^*$};
    \node at (1.7,1.3) {\small$C$};
    \node at (0.5,3) {\small$\Lambda=(0,1,0)$};
    \begin{scope}[xshift=-5cm]
        \fill[{gray!20}, draw=black, thick] (0,1) -- (1,2) -- (2,2) -- (1,1);
        \fill[{gray!20}, draw=black, thick] (2,0) -- (0,0) -- (0,1) -- (2,1);
        \draw [-,dashed](0,1) -- (0,2.2);
        \draw [-,dashed](2,1) -- (3.3,1);
        \draw [-,dashed](0,1) -- (-0.8,0.2);
        \draw [-,dashed](0,1) -- (0,-0.3);
        \draw [-,dashed](0,1) -- (-0.8,1);
        \draw [-,dashed](1,2) -- (2.5,3.5);
        
        \draw [-,dashed](2,2) -- (2,3.5);
        \draw [-,dashed](2,2) -- (3,2);
        \draw [-,dashed](2,2) -- (-0.3,-0.3);
        \draw [-,dashed](2,2) -- (-0.8,2);
        \draw [-,dashed](2,2) -- (2.8,2.8);
    
        \draw [-,dashed](2,0) -- (3,0);
        \draw [-,dashed](2,0) -- (1.6,-0.4);
        \draw [-,dashed](2,0) -- (2,-0.5);
        \draw [-,dashed](2,0) -- (-0.5,0);
        \draw [-,dashed](2,0) -- (3.3,1.5);
        \draw [-, thick](2,0) -- (2,2);
    
        
        \node at (0,1)[circle,draw=black,fill=white,inner sep=1.5pt]{};
        \node at (2,2)[circle,draw=black,fill=white,inner sep=1.5pt]{};
        \node at (2,0)[circle,draw=black,fill=white,inner sep=1.5pt]{};
    
        \node at (-0.2,1.2) {\small$v_2$};
        \node at (2.3,1.8) {\small$v_1$};
        \node at (2.3,-0.2) {\small$v_3$};
        \node at (1.7,1.3) {\small$\mathcal{X}^*$};
        
        \node at (0.5,3) {\small$\Lambda=(\tfrac{1}{2},\tfrac{1}{2},\tfrac{1}{2})$};
    \end{scope}
    \end{tikzpicture}
    \caption{Sample $V$ as given in \cref{ex:running_example} for $a,b,c=0$. On the left, tropical hyperplanes forming $\mathscr{C}(V,\Lambda)$ associated with $\Lambda=\tfrac{1}{2}\ones$ are drawn as dashed rays. Each pairwise geodesic is shaded gray. $\mathcal{X}^*$ is not contained in any pairwise geodesic, but is clearly optimal for any equally weighted $w'$. On the right, $\Lambda=(0,1,0)$ along with the corresponding decomposition $\mathscr{C}(V,\Lambda)$. The max-plus tropical convex hull $\mathrm{tconv}_{\max}(V)$ is drawn in red with solid line segments. The Fermat--Weber set for $w'=(t-1,t,1)$ given any $t>1$ is denoted by $\mathcal{X}^*$. Clearly, any $x^0\in \mathcal{X}^*$ is inverse feasible, and many such choices such that $x^0 \notin \mathrm{tconv}_{\max}(V)$ are possible. Additionally, any $x^0\in \mathrm{relint}(C)$ is infeasible in the inverse problem unless $w'$ is permitted to take values of $0$ over some indices.}
    \label{fig:geodesic_counterexample}
    \end{figure}
\end{example}


Our previous examples show that bounded cells of $\mathscr{C}(V,\Lambda)$ may not correspond to any pairwise geodesic path (\cref{fig:geodesic_counterexample} left), nor to a subset of a tropical convex hull (\cref{fig:geodesic_counterexample} right). The next lemma provides a suitable characterization in terms of tropical sector intersections.
\begin{lemma}\label{lem:bounded_intersection}
    Let $\mathscr{C}(V,\Lambda)$ denote cellular decomposition of $\R^n$ induced by union of min-plus and max-plus hyperplane arrangements as given in \cref{eq:cellular_decomposition}. Then for arbitrary $x\in \R^n$, $x$ is contained for a \emph{bounded} cell $C$ of $\mathscr{C}$ \emph{if and only if} there exist $v_i,\,v_k\in V$ such that $\lambda_i<1$, $\lambda_k>0$, and $x\in \{S^{\max}_j(-v_i)\cap S^{\min}_j(-v_k)\}$ for some $j\in[n]$.
\end{lemma}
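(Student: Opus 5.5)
The approach is to characterize boundedness of a closed cell of $\mathscr{C}(V,\Lambda)$ through its recession cone, working modulo $\R\ones$. A closed cell is bounded in $\TPT{n}$ exactly when its recession cone is contained in $\R\ones$. To each $x\in\R^n$ I attach its covector data:
\[
M_i(x)\coloneq \argmax_j(x_j-V_{ij}) \ \text{ for } \lambda_i<1, \qquad N_k(x)\coloneq \argmin_j(x_j-V_{kj}) \ \text{ for } \lambda_k>0 .
\]
This uses \cref{eq:polar_cone}: the fan of gauge $i$ contributes the max-plus sectors at $v_i$ only if $\lambda_i<1$, and the min-plus sectors at $v_k$ only if $\lambda_k>0$. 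With this notation, $x\in S^{\max}_j(-v_i)$ means $j\in M_i(x)$, and $x\in S^{\min}_j(-v_k)$ means $j\in N_k(x)$. The condition in the lemma therefore reads
\[
\bigl(\textstyle\bigcup_i M_i(x)\bigr)\cap\bigl(\textstyle\bigcup_k N_k(x)\bigr)\neq\emptyset .
\]
A closed cell $C$ is described by index sets $M_i'$ and $N_k'$ via
\[
C=\bigl\{y : M_i'\subseteq\argmax_j(y_j-V_{ij}),\ N_k'\subseteq\argmin_j(y_j-V_{kj})\bigr\},
\]
and $x\in C$ holds exactly when $M_i'\subseteq M_i(x)$ and $N_k'\subseteq N_k(x)$ for all $i,k$.

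For sufficiency, assume $j\in M_i(x)\cap N_k(x)$ and take $C$ to be the closed cell whose index sets are $M_i(x)$ and $N_k(x)$ themselves. Let $d$ be a recession direction of $C$. Then for every $t\geq 0$ the point $x+td$ still has $j$ attaining the maximum of $y_l-V_{il}$, which forces $d_j=\max_l d_l$. The same point still has $j$ attaining the minimum of $y_l-V_{kl}$, which forces $d_j=\min_l d_l$. Hence $d\in\R\ones$, so $C$ is bounded in $\TPT{n}$ and contains $x$.

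For necessity, argue by contraposition: suppose $U\coloneq\bigcup_i M_i(x)$ and $W\coloneq\bigcup_k N_k(x)$ are disjoint. Define a direction $d$ as follows.
\begin{itemize}
\item If some $\lambda_i<1$, set $d=\ones_U$. Here $U\neq\emptyset$, and $U\neq[n]$: either $W\neq\emptyset$ is disjoint from $U$, or no $\lambda_k>0$ and $U$ is a union of argmax sets, which I expect can be handled directly.
\item If every $\lambda_i=1$, set $d=-\ones_W$ instead.
\end{itemize}
In either case $d\notin\R\ones$. Now let $C$ be any closed cell containing $x$, with index sets $M_i'\subseteq M_i(x)$ and $N_k'\subseteq N_k(x)$, and put $y=x+td$ for $t\geq0$.
\begin{itemize}
\item For $j\in M_i'$, the value $y_j-V_{ij}=\max_l(x_l-V_{il})+t$ dominates every other coordinate, because $d\leq 1$ everywhere.
\item For $j\in N_k'$, we have $d_j=0$, since $W\cap U=\emptyset$. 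So $y_j-V_{kj}=\min_l(x_l-V_{kl})$, and every other coordinate is at least this because $d\geq 0$.
\end{itemize}
The all-$\lambda=1$ case is symmetric. Thus $x+td\in C$ for all $t\geq 0$, so $C$ is unbounded in $\TPT{n}$, and no bounded cell contains $x$.

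The main obstacle is the bookkeeping between closed and relatively open cells. The key point is that "$x$ lies in a bounded cell" must be checked against every closed cell containing $x$, and those are exactly the cells whose index sets are contained in those of $x$. A second issue is the degenerate bias cases. When $\lambda_i\in\{0,1\}$, only one of the two hyperplanes at $v_i$ is present, and the constraints $\lambda_i<1$ and $\lambda_k>0$ in the statement must match this exactly. Finally, the subcase $U=[n]$ with no min-plus constraints needs a separate short argument; I expect a direction such as $\ones_{M_i(x)}$ for a single $i$ to work there, but I have not checked it.
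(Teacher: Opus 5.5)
The gap is the subcase you deferred, and it cannot be closed. In that subcase the ``only if'' direction of the statement is false.

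Take $n=3$ and $\Lambda=\zeros$, so $\mathscr{C}(V,\Lambda)$ consists of max-plus hyperplanes only. Let $v_1=(0,0,0)$, $v_2=(0,3,1)$, $v_3=(0,2,3)$. Consider the closed cell $C=S^{\max}_2(-v_1)\cap S^{\max}_3(-v_2)\cap S^{\max}_1(-v_3)$. In the chart $y_1=0$ it is the triangle $1\le y_3\le y_2\le 2$, so it is a bounded, full-dimensional cell. Yet no $\lambda_k>0$ exists, so the right-hand side of the statement never holds.

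Every $x\in C$ has $U=[3]$. Since $C$ is bounded, no direction outside $\R\ones$ keeps $x+td$ in $C$ for all $t\ge 0$. In particular, your suggested $\ones_{M_i(x)}$ cannot work.

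Negating $V$ and taking $\Lambda=\ones$ gives the mirror counterexample. It also refutes your assertion that $d=-\ones_W\notin\R\ones$ in the all-$\lambda_i=1$ branch: there $W=[3]$ and $d=-\ones$.

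The paper's own proof has the same hole. Its direction $\ones_M-\ones_N$ collapses to $\ones$ when $N=\emptyset$ and $M=[n]$. Moreover, the paper's proof of \cref{thm:Minkowski_Weyl} explicitly uses that $\mathscr{C}(V,\zeros)$ has bounded cells.

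The statement therefore needs the extra hypothesis that some $\lambda_i<1$ and some $\lambda_k>0$. Under that hypothesis $U$ and $W$ are both nonempty, so $U\cap W=\emptyset$ forces $U\neq[n]$. Your first bullet then always applies, and your proof is complete.

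With that hypothesis, your argument is essentially the paper's, with two small differences:
\begin{itemize}
\item For sufficiency, the paper uses the explicit bounds $x_l-x_j\le u_l-u_j$ and $x_j-x_l\le v_j-v_l$, where you use a recession-cone argument.
\item For necessity, the paper uses the direction $\ones_M-\ones_N$, where you use $\ones_U$. Either direction works.
\end{itemize}

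Your recession-cone formulation in fact gives the correct general statement. A point $x$ lies in a bounded cell if and only if the recession cone $\{d : d_j=\max_l d_l \text{ for } j\in U,\ d_j=\min_l d_l \text{ for } j\in W\}$ of the smallest closed cell containing $x$ equals $\R\ones$. Concretely:
\begin{itemize}
\item when both hyperplane types occur, this holds if and only if $U\cap W\neq\emptyset$;
\item when all $\lambda_i=0$, it holds if and only if $U=[n]$;
\item when all $\lambda_i=1$, it holds if and only if $W=[n]$.
\end{itemize}
The last two are the usual tropical-convex-hull criterion.
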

\begin{proof}
    The requirement that $\lambda_i<1$ is obvious, since otherwise $S^{\max}_j(\zeros)$ doesn't correspond to any cone $\sigma\in \mathcal{N}_i$. The same logic applies to $\lambda_k>0$. Thus, in what follows, when we write $S^{\max}_j(-v_i)$, it is implied that $\lambda_i<1$ also.
    
    Let $C=\{x : x\in S^{\max}_j(-u), x\in S^{\min}_j(-v)\}$ for some $u,v\in V$ and $j\in[n]$. We first show that $C$ is bounded. By definition of tropical sectors, $x_i-u_i\leq x_j-u_j$ for $i\in[n]$ and any $x\in C$. Similarly, $x_j-v_j\leq x_k-v_k$ for $k\in [n]$. Together, these imply
    \[
    x_i-x_k\leq u_i-u_j+v_j-v_k=(u_i-u_k)+\underbrace{(v_j-u_j-v_k+u_k)}_{(*)}.
    \]
    The term with $(*)$ on the right hand side is bounded from above by $\dtr(u,v)$. Since $x_i-x_k=(x_i-x_j)+(x_j-x_k)$, we see that every pairwise difference must similarly be bounded. Thus, the coordinate differences in $\TPT{n}\simeq \R^{n-1}$ are bounded. Since $x \in C$ was arbitrary, $C$ must be bounded.
    
    Now let $C$ be an arbitrary bounded cell of $\mathscr{C}(V,\Lambda)$. We must show that for arbitrary $x\in C$ there exist $u,v\in V$ such that $x\in\{S^{\max}_j(-u)\cap S^{\min}_j(-v)\}$ for some $j\in[n]$. Suppose $\#V=m$, and let $A=(A_i \mid A_i\subseteq [n])$ for $i\in [m]$ (resp. $B=(B_k \mid B_k\subseteq [n])$ for $k\in [m]$) denote the (possibly empty) indices of the max-plus (resp. min-plus) sectors containing $C$. If we can find $j\in[n]$ such that $j\in A_i$ and $j\in B_k$, then we are done. Suppose then that no such $j$ exists. Let $M=\cup_iA_i$ denote the union of all indices in $A$, and similarly for $N=\cup_kB_k$. By our earlier assumption $M\cap N=\emptyset$. Let $d=\ones_M-\ones_N$ and consider $x'=x+td$ for some $t>0$. All inequalities implied by $A$ and $B$ remain valid for $x'$, even for arbitrarily large $t$, which implies $C$ is unbounded. Thus, we arrive at a contradiction, meaning there must exist $j\in[n]$ with $j\in A_i$ and $j\in B_k$. Take $i,k$ as the indices for the two points in $V$, i.e., $u=v_i$ and $v=v_k$, so that $x\in \{S^{\max}_j(-u)\cap S^{\min}_j(-v)\}$.
\end{proof}

If we allow ourselves to modify the gauges by adjusting $\lambda'_i$ in addition to $w_i'$, then clearly the problem becomes much easier. In particular, $x^0\in \mathrm{tconv}_{\mathrm{min}}(V)$ becomes sufficient by setting $\lambda'\leftarrow \zeros$, and similarly for $x^0\in[v_i,v_k]_\lambda$ by setting $\lambda_i\leftarrow (1-\lambda)$, $\lambda_k\leftarrow \lambda$, and $w'\leftarrow \ones_{\{i,j\}}$ as outlined earlier. This most relaxed version allows for the following characterization of the region associated to feasible $x^0$ in the inverse problem.

\begin{theorem}
    For any $x^0\in C$, where $C$ is a closed, bounded cell of $\mathscr{C}(V,\Lambda)$, there exist weights $w'$ and biases $\Lambda'$ such that $x^0\in \tfw(V,w',\Lambda')$.
\end{theorem}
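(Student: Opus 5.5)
Given $x^0$ in a closed, bounded cell $C$ of $\mathscr{C}(V,\Lambda)$, I would reduce the problem to the two-point geodesic construction described just before the statement. The first step is \cref{lem:bounded_intersection}. It supplies indices $i,k$ with $\lambda_i<1$, $\lambda_k>0$ and a coordinate $j\in[n]$ such that
\[
x^0\in S^{\max}_j(-v_i)\cap S^{\min}_j(-v_k).
\]
If $i=k$, then $x^0$ lies in both the max-plus and the min-plus $j$-th sector of $-v_i$. This forces $x^0_l-V_{il}=x^0_j-V_{ij}$ for all $l$, so $x^0\in\tilde v_i$. In that case I set $w'=\ones_i$ (the majority case), and $\Lambda'$ may be arbitrary. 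From now on I assume $i\neq k$.

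Next I choose the modified biases and weights. I set $\lambda'_i=0$ and $\lambda'_k=1$. All other $\lambda'_l$ are arbitrary, with $w'_l=0$ for $l\notin\{i,k\}$, and $w'_i=w'_k=1$. The problem then becomes minimizing
\[
\dtr^{0}(x,v_i)+\dtr^{1}(x,v_k),
\]
which is the two-point problem $[v_i,v_k]$ in the notation of the geodesic discussion. With $\lambda=0$ the gauge for $v_i$ has $\alpha=n$, $\beta=0$, $\delta=-1$, so its subdifferential at $x^0$ contains $p_i=n\ones_{j'}-\ones$ for any $j'\in M_i=\argmax_l(V_{il}-x^0_l)$. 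Analogously, the gauge for $v_k$ with $\lambda=1$ gives $p_k=-n\ones_{j''}+\ones$ for any $j''\in N_k=\argmin_l(V_{kl}-x^0_l)$.

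The key check is that the sector memberships put the same index $j$ in the right argmax and argmin. The condition $x^0\in S^{\max}_j(-v_i)$ means $j\in\argmax_l(x^0_l-V_{il})$, which is $\argmin_l(V_{il}-x^0_l)$. The condition $x^0\in S^{\min}_j(-v_k)$ gives $j\in\argmax_l(V_{kl}-x^0_l)$. This sign convention has to be matched against \cref{lem:subdiff_defined} and the definition of $\dtr^\lambda(x,v)$ as $\gamma(v-x)$ versus $\gamma(x-v)$; I expect this bookkeeping to be the main obstacle. Depending on the convention, the correct assignment may be $\lambda'_i=1$ and $\lambda'_k=0$ instead. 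Either way one of the two assignments yields subgradients $p_i=c(n\ones_j-\ones)$ and $p_k=-c(n\ones_j-\ones)$ at $x^0$, both built on the common index $j$. Then $p_i+p_k=\zeros\in\partial f(x^0)$, and by \cref{def:fw_subdifferential} (equivalently \cref{thm:Durier} applied to the elementary set $\mathcal{C}_P\ni x^0$) $x^0$ is optimal.

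Finally I verify that the chosen assignment is admissible. This is exactly where the hypotheses $\lambda_i<1$ and $\lambda_k>0$ from \cref{lem:bounded_intersection} enter: they guarantee that the sectors used are genuine cones of $\mathcal{N}_i$ and $\mathcal{N}_k$, so the geometry of $C$ is compatible with the fully asymmetric gauges chosen. Strictly speaking, the zero-weight optimality argument does not even need them once $\Lambda'$ is free. If positive weights on all points are desired, I would instead perturb: take $w'_l=\varepsilon$ for $l\neq i,k$, and absorb their subgradients by scaling $w'_i=w'_k=1$, or by the majority-type argument. This is an optional strengthening that I would attempt only after the main case.
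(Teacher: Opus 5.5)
You follow the paper's proof: use \cref{lem:bounded_intersection} to get $i,k,j$ with $x^0\in S^{\max}_j(-v_i)\cap S^{\min}_j(-v_k)$, put $w'=\ones_{\{i,k\}}$, and give $v_i,v_k$ opposite extreme biases so that $x^0$ lies on the two-point geodesic. Your separate handling of $i=k$ is fine.

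The real defect is that you never commit to the bias assignment. The assignment you lead with, $\lambda'_i=0$, $\lambda'_k=1$, is the wrong one.

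Your own computations already settle the choice. The sector conditions give $j\in N_i=\argmin_l(V_{il}-x^0_l)$ and $j\in M_k=\argmax_l(V_{kl}-x^0_l)$. By \cref{lem:subdiff_defined}, a bias-$1$ gauge has subgradients $\ones-n\ones_b$ with $b\in N$, and a bias-$0$ gauge has subgradients $n\ones_a-\ones$ with $a\in M$. So you need $\lambda'_i=1$, $\lambda'_k=0$, which is exactly the paper's $\Lambda'=\ones_i$. This gives $p_i=\ones-n\ones_j$ and $p_k=n\ones_j-\ones$, hence $p_i+p_k=\zeros$.

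Your leading assignment would instead need $j\in M_i\cap N_k$, which the lemma does not provide, and it really does fail. Take $v_1=\zeros$, $v_2=(0,2,1)$, $x^0=(0,\tfrac12,-\tfrac14)$. This point lies in a bounded cell for $\Lambda=(\tfrac12,\tfrac12)$, and there the lemma forces $i=1$, $k=2$, $j=2$. The objective $\dtr^{0}(x,v_1)+\dtr^{1}(x,v_2)$ equals $\tfrac{15}{4}$ at $x^0$ but only $3$ at $x=v_1$. So drop the ``one of the two'' hedge and state the definite choice; the argument is then complete.

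Your optional positive-weight strengthening does not work as sketched. For generic $x^0$, both extreme-bias subdifferentials are single points, so changing $w'_i,w'_k$ only moves along the line through $n\ones_j-\ones$ and cannot absorb arbitrary $\varepsilon$-terms. The statement does not need this strengthening.
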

\begin{proof}
    Since $x^0\in C$ for bounded and closed $C\in \mathscr{C}(V,\Lambda)$, then by \cref{lem:bounded_intersection} there exist $v_i,v_k\in V$ and $j\in[n]$ such that $x^0\in \{S^{\max}_j(-v_i)\cap S^{\min}_j(-v_k)\}$. Setting $\Lambda'\leftarrow \ones_i$ means that $x^0\in [v_i,v_k]_{\lambda=0}$. Setting $w'\leftarrow \ones_{\{i,k\}}$ means that every other point is trivial. Thus, $x^0\in \tfw(V,\Lambda',w')$. 
\end{proof}

Adjusting $\Lambda'$ to or from the boundary of the interval $[0,1]$ may not be realistic or desirable in many instances. Such prohibitions may correspond to gauges whose bias is fixed. A less intrusive modification for $\Lambda'$ might require, for instance, disallowing any additions/deletions of tropical hyperplanes from $\mathscr{C}(V,\Lambda)$. This is equivalent to requiring that we maintain the original structure on $\mathcal{G}$, the graph associated to the dual min-cost flow problem. In the notation of \Cref{sec:preliminaries}, this would equate to solving for $\alpha,\,\beta \in \R^m_{\geq 0}$ subject to $\lambda_i=0$ if $\alpha_i=0$ and similarly, $\gamma_i=1$ if $\beta_i=0$.  We call this the \emph{relaxed} inverse Fermat--Weber feasibility problem ($\mathrm{RITFW}$). 

The following lemma shows that checking for feasibility on this relaxed problem is equivalent to checking on the original problem. That is, if $x^0$ is feasible in $\mathrm{RITFW}(V,\Lambda,w, x^0)$, then there exist non-negative weights $w'$ such that $x^0\in \tfw(V,\Lambda,w')$.

\begin{lemma}\label{lem:x0_inverse_feasible}
    Let $V=\{v_1,\ldots,v_m\}\subset \TPT{n}$ and $\Lambda\in [0,1]^m$. Suppose we have found weights $\alpha,\,\beta\in \R^m_{\geq0}$ such that $x^0\in \mathcal{X}^*$ is optimal and so that $\alpha_i=0$ for all $i\in \{j\mid \lambda_j=0\}$ and $\beta_i=0$ for all $i\in\{j : \lambda_j=1 \}$. Then the problem $\mathrm{ITFW}(V,\Lambda,w,x^0)$ is feasible.
\end{lemma}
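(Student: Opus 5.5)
The plan is to translate the hypothesis into a subgradient balance, as in the proof of the preceding theorem, and then rescale the pieces so that each point's max-part and min-part come back in the ratio $(1-\lambda_i):\lambda_i$ prescribed by its fixed bias. First I rewrite each generator of the subdifferential in \cref{lem:subdiff_defined} as
\[
\alpha_i\ones_j-\beta_i\ones_{j'}+\delta_i\ones=\alpha_i\bigl(\ones_j-\tfrac1n\ones\bigr)-\beta_i\bigl(\ones_{j'}-\tfrac1n\ones\bigr),
\]
which separates the contribution of $v_i$ into a \emph{max-part} weighted by $\alpha_i$ and a \emph{min-part} weighted by $\beta_i$. Optimality of $x^0$ under $(\alpha,\beta)$ then means we can pick $s_i\in\mathrm{conv}\{\ones_j : j\in M_i\}$ and $t_i\in\mathrm{conv}\{\ones_{j'} : j'\in N_i\}$ (with $M_i,N_i$ evaluated at $x^0$) such that
\[
\sum_i\alpha_i s_i-\sum_i\beta_i t_i=\tfrac{1}{n}\Bigl(\sum_i\alpha_i-\sum_i\beta_i\Bigr)\ones .
\]
Next I read this identity as a flow in the graph $\mathcal{G}$ of $(\mathrm{D})$: $\phi_i$ sends $\beta_i$ units into the $x$-nodes indexed by $N_i$, and the $x$-nodes send $\alpha_k$ units to $\psi_k$ through $M_k$, up to the uniform constant correction. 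The goal is a new flow with capacities $\alpha'_i=w_i'(1-\lambda_i)n$ and $\beta'_i=w'_i\lambda_i n$ for some $w'\ge\zeros$, $w'\neq\zeros$. The sign conditions on $(\alpha,\beta)$ handle the degenerate cases: if $\lambda_i=0$ then $\beta_i$ and $\beta'_i$ both vanish (the min-part is absent for both flows), and if $\lambda_i=1$ the same holds for $\alpha_i,\alpha_i'$. So only indices with $\lambda_i\in(0,1)$ carry a ratio constraint, and no arc outside the original $\mathcal{G}$ is ever used.

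The key step is a path decomposition of the flow. Every unit of flow runs $\phi_i\to x_j\to\psi_k$ with $j\in N_i\cap M_k$, which by \cref{eq:polar_cone} places $x^0$ in the intersection of a max-plus sector at one sample point and a min-plus sector at another, both indexed by $j$. This is exactly the configuration of \cref{lem:bounded_intersection}. I then plan to rescale the paths so that, at every node, the in-mass and out-mass of index $i$ are in ratio $\lambda_i:(1-\lambda_i)$. This amounts to a homogeneous linear system in nonnegative path multipliers together with $w'$. I will prove that it has a nonzero nonnegative solution by a Farkas-type alternative: if it had none, there would be a dual certificate, and I will try to show that such a certificate contradicts the optimality of $x^0$ under $(\alpha,\beta)$, i.e. contradicts $\zeros\in\partial f(x^0)$. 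If that can be closed, setting $w'_i=(\alpha'_i+\beta'_i)/n$ recovers weights with $x^0\in\tfw(V,\Lambda,w')^*$ via \cref{thm:Durier}.

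The main obstacle is this rescaling step. Raising $w'_i$ to accommodate a large $\alpha_i$ simultaneously creates surplus $\beta'_i$ on the min side, and that surplus must be absorbed by max-parts of other points through shared indices $j$. I would first attempt a cycle-cancelling or augmentation argument in the residual graph, in the spirit of \cref{lem:arbitrary_a}, to route the surplus, falling back on the Farkas alternative if that fails. If neither closes, the fallback is to allow $w'$ to be supported on a subset of $V$, for instance the two points produced by \cref{lem:bounded_intersection}. This fallback is also uncertain: keeping their biases fixed may still leave an imbalance.
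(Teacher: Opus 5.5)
The step you flag as the main obstacle is where the argument breaks, and it cannot be repaired. With $\Lambda$ held fixed the statement is false, so no Farkas certificate will contradict optimality under $(\alpha,\beta)$.

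Take $n=3$, $v_1=(0,0,0)$, $v_2=(0,2,1)$, $\Lambda=(\tfrac14,\tfrac14)$ and $x^0=(0,1,\tfrac12)$. Then $v_1-x^0=(0,-1,-\tfrac12)$ and $v_2-x^0=(0,1,\tfrac12)$. Hence $M_1=N_2=\{1\}$ and $M_2=N_1=\{2\}$ are singletons, the objective is differentiable at $x^0$, and your balance identity becomes
\[
(\alpha_1-\beta_2)\ones_1+(\alpha_2-\beta_1)\ones_2=\tfrac13\bigl(\alpha_1+\alpha_2-\beta_1-\beta_2\bigr)\ones
\quad\Longleftrightarrow\quad \alpha_1=\beta_2,\ \ \alpha_2=\beta_1 .
\]
So $\alpha=(1,1)$, $\beta=(1,1)$ makes $x^0$ optimal and meets the hypothesis: no $\lambda_i$ is $0$ or $1$, and all four parts are even strictly positive. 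With the biases fixed, however, $\alpha'_i=\tfrac94 w'_i$ and $\beta'_i=\tfrac34 w'_i$. The conditions $\alpha'_1=\beta'_2$ and $\alpha'_2=\beta'_1$ then read $w'_2=3w'_1$ and $w'_1=3w'_2$. Only $w'=\zeros$ survives, which the paper explicitly does not count as a feasible solution.

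In your flow language, the paths $\phi_1\to x_2\to\psi_2$ and $\phi_2\to x_1\to\psi_1$, glued at each sample point by $\beta'_i=\tfrac{\lambda_i}{1-\lambda_i}\alpha'_i$, form a single cycle. A nonzero rescaling exists only if the product of these gains around the cycle is $1$, which here means $\lambda_1+\lambda_2=1$. This matches the choice $\Lambda=(1-\lambda,\lambda)$ in \cref{fig:skew_gage_example}. Your fallback of shrinking the support of $w'$ also fails: the only proper subsets are single points, and $x^0$ is neither $v_1$ nor $v_2$.

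The paper's own proof does not get around this obstruction either. It sets $w_i=(\alpha_i+\beta_i)/n$, which reproduces $\alpha_i=w_i(1-\lambda_i)n$ and $\beta_i=w_i\lambda_i n$ only when $\lambda_i=\beta_i/(\alpha_i+\beta_i)$. In effect it silently replaces each interior bias by that value.

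What the hypothesis genuinely yields is $x^0\in\tfw(V,\Lambda',w')^*$ with $w'_i=(\alpha_i+\beta_i)/n$ and $\lambda'_i=\beta_i/(\alpha_i+\beta_i)$. Here $\lambda'_i=\lambda_i$ at the boundary indices, provided the hypothesis is read as you read it: $\beta_i=0$ when $\lambda_i=0$, and $\alpha_i=0$ when $\lambda_i=1$. The printed statement has $\alpha$ and $\beta$ swapped.

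Feasibility with $\Lambda$ itself holds exactly when the cone of admissible $(\alpha,\beta)$ at $x^0$ meets the family $\{(w(1-\lambda_i)n,\,w\lambda_i n)\}_i$ nontrivially. Carried out honestly, your Farkas alternative would characterize when that happens, for instance through gain conditions on cycles like the one above. It cannot show that it always happens.
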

\begin{proof}
    We must show that there exists $w\in\R^m_{\geq0}$ such that $\alpha_i=w_i(1-\lambda_i)n$ and $\beta_i=w_i\lambda_in$. Consider arbitrary $i\in[m]$. There are three cases: $\lambda_i=0$, $\lambda_i=1$, and $0<\lambda_i<1$. If $\lambda_i=0$, then set $w_i=\alpha_i/n$. If $\lambda_i=1$, set $w_i=\beta_i/n$. If $0<\lambda_i < 1$, we need $\lambda_i=1-\alpha_i/(w_in)=\beta_i/(w_in)$. Thus, set $w_i=(\alpha_i+\beta_i)/n$ in this final case. By construction, $w$ is non-negative, so we are done.
\end{proof}

\Cref{lem:x0_inverse_feasible} says that the feasibility question depends only on the support of the weights ($\alpha,\beta$) rather than the their actual values, which is precisely what determines which hyperplanes are included in $\mathscr{C}(V,\Lambda)$. In the relaxed inverse problem, we allow these supports to vary, while in the original formulation they must remain fixed.

\subsection{Inverse Primal Problem}

As mentioned earlier, when a feasible inverse solution exists it is generally not unique, and one typically wishes to find such a feasible solution that minimizes some cost function associated with some measure of deviation from the original weight vector. In \cite{ZhangLiu1996} (and later in \cite{AhujaOrlin2001}), the authors show that linear programming problems of the form $\mathrm{(P)}$ with $f(w')=\lVert w'-w\rVert$ under an $\ell^1$ or $\ell^\infty$ norm have inverses that can be formulated as linear programs. For example, one writes $w'=w+a-b$ where $a,b\in \R^m_{\geq 0}$ and minimizes $f(w')=\langle c_a,a\rangle + \langle c_b,b\rangle$ where $c_a,c_b\in\R^m_{\geq0}$ are fixed non-negative costs associated with increasing or decreasing $w$. When $c_a=c_b=\ones$, this simplifies to the $\ell^1$ minimization case. 

We now provide linear programming formulations for the $\mathrm{ITFW}$ problem in the $\ell^1$ and $\ell^\infty$ case. These formulations largely follow those in \cite{AhujaOrlin2001}, with a minor specialization that accounts for the additional constraints required for net supply to equal net demand in the associated network flow problem. These additional constraints essentially enforce
\[
\delta'_{tot}=\sum_iw'_i(2\lambda_i-1)
\]
Let $\mathcal{A}^0$ denote the arc set of the equality subgraph at $x^0$, i.e., the arcs corresponding to those constraints in $(\mathrm{{P}})$ that are tight at $x^0$.
Then we can solve the primal inverse tropical Fermat--Weber problem using the following LP (cf. \cite{AhujaOrlin2001}, Section 4):
\begin{equation*}
    \begin{aligned}
        \underset{\rho,\,a,\,b\,\geq 0}{\text{minimize}} & \quad \langle c_a,a\rangle + \langle c_b, b\rangle \\
        \text{s.t.}\quad & \sum_{\{j:(i,j)\in \mathcal{A}^0\}}-\rho_{ij} + \beta_i(a_i-b_i)=-w_i\beta_i \quad &i \in [m],\\
        & \sum_{\{j:(j,i')\in \mathcal{A}^0\}}\rho_{ji'} - \alpha_i(a_i - b_i)=w_i\alpha_i \quad &i \in [m],\\
        & \sum_{\{i:(i,j)\in \mathcal{A}^0\}}\rho_{ij} - \sum_{\{i:(j,i')\in \mathcal{A}^0\}}\rho_{ji'} - \sum_i\delta_i(a_i-b_i)=\delta_{tot} \quad &j \in [n],
    \end{aligned}
\end{equation*}
where we emphasize that $\alpha_i$, $\beta_i$, and $\delta_i$ are the \emph{non-weighted} version of the parameters. This LP has at most $2m+n$ constraints in at most $m(n+2)$ variables.
Formulating the $\ell^\infty$ minimization problem is straightforward extension of the above LP. Introduce one additional variable $\theta$, which we intend to minimize, subject to additional constraints
\[
\theta\geq (c_a)_ia_i + (c_b)_ib_i \quad \text{for }i\in[m].
\]

It is instructive to note that the formulation above only depends on $V$ or $x^0$ in determining the arcs of the equality subgraph $\mathcal{A}^0$. This reflects the fact that it is the directional information of the subdifferentials at $x^0$ that matters once the actual distances become fixed. Finally, we see that while the original primal tropical Fermat--Weber problem possesses a totally unimodular constraint matrix (see \cref{rem:total_unimodularity}), the inverse problem does not. In the next subsection, we see that this loss of total unimodularity also occurs in the dual inverse problem.


\subsection{Inverse Dual Problem}

One may also consider the inverse of the dual problem. In this setting, the question becomes, ``Given some feasible flow to a min-cost flow formulation, which arc costs make this flow optimal?'' As with the primal inverse, the goal is typically to select these new arcs costs in such a way as to minimize some measure of perturbation from the original arc costs, typically $\ell^1$ or $\ell^\infty$. As in the primal inverse, the feasibility question can be considered vacuous unless we impose restrictions on the coordinates of $v_i\in V$. For example, a trivial solution is given by perturbing all points to lie in the same equivalence class (i.e. $\dtr(v_i,v_k)=0$ for all $v_i,v_k \in V$). 

Each feasible flow in $\mathrm{MCF}$ corresponds to a subset of constraints in $\mathrm{(P)}$ that are met with equality for some optimal point $x^*\in \mathcal{X}^*$. Since these tight constraints correspond to tropical sectors, the problem asks to perturb the apices of a tropical hyperplane arrangements so that the intersection of the fixed tropical sectors corresponding to $\rho^0$ becomes non-empty.

In the generic case, the authors in \cite{AhujaOrlin2001} show that the inverse min-cost flow problem under the $\ell^1$ norm is again a min-cost flow problem. This, however, assumes perturbation of arc costs can be done independently, an assumption that is violated any time we have $\lambda_i\in(0,1)$. Indeed, for any $\lambda_i\in(0,1)$, an arc $\rho_{ij}$ must have a per unit cost that is the additive complement of the per unit cost of an arc $\rho_{ji'}$. Coupling arc costs in this fashion means that, in general, the inverse tropical Fermat--Weber dual problem cannot be easily transformed into a min-cost flow problem as in \cite{AhujaOrlin2001}.

Let $\rho^0$ be any feasible flow solution to $(\mathrm{D})$. As depicted in \cref{fig:mcf_formulation}, arcs in the network flow graph are partitioned according to $\mathcal{A=\mathcal{A_{\max}}\sqcup \mathcal{A}_{\min}}$, and so we define
\[
\mathcal{A^+_{\max}}=\{e\in \mathcal{A}_{\max}: \rho^0_e>0\},\qquad 
\mathcal{A^+_{\min}}=\{e\in \mathcal{A}_{\min}: \rho^0_e>0\},\\
\]
as the subset of arcs in each partition with positive (binding) flow values. Similarly, let $\mathcal{A}^0_{\max}$ and $\mathcal{A}^0_{\min}$ denote the subset of arcs carrying zero flow in $\rho^0$. We again use $i'\in[m]$ as a notational device to indicate that $i$ and $i'$ are separate nodes whose arc costs are coupled by the relation $c_{ij}=-c_{ji'}$.

The inverse dual tropical Fermat--Weber problem ($\mathrm{IDTFW}$) asks to find $U\in \R^{m \times n}$ at minimal cost such that $\rho^0$ is optimal in $(\mathrm{D})$. Here, $U_{ij}=V_{ij}+a_{ij}-b_{ij}$ and \emph{minimal cost} is computed as a (not necessarily uniform or symmetric) penalty on the perturbations of $V$ as in the primal inverse formulation. Again, when $c_a=c_b=\ones$, this is equivalent to minimizing using the $\ell^1$ norm. As we did with the primal inverse, we again convert to the dual and work with node potentials $\pi=(\phi,x,\psi)$. Complementary slackness requires that
\begin{align*}
    x_j-\phi_i - a_{ij} + b_{ij} &= V_{ij} \quad &(i,j)\in \mathcal{A^+_{\max}},\\
    \phi_i - x_j + a_{ij} - b_{ij} &= -V_{ij} \quad &(j,i')\in \mathcal{A^+_{\min}},
\end{align*}
which shows $\phi_i=\psi_i$ any time both $\rho^0_{ij}>0$ and $\rho^0_{ji'}>0$. That $\phi_i\geq \psi_i$ holds in general follows directly from the dual constraints. Constraints associated to arcs in $\mathcal{A}^0_{\max}$ and $\mathcal{A}^0_{\min}$ are similar, but with inequalities $\leq$ in place of the equalities. Thus, (IDTFW) can be solved as an LP with at most $2mn$ constraints in at most $2m+n+2mn$ variables. 

A potentially useful alternative formulation comes from eliminating the $a_{ij}$ and $b_{ij}$ variables in order to work purely over the node potentials $\pi$. For simplicity in what follows, we assume minimization using the $\ell^1$ norm. First, note that
\[
f_e(\pi)=\lvert U_{ij}-V_{ij}\rvert = \begin{cases}
    \lvert x_j-\phi_i - V_{ij}\rvert \quad &\rho^0_{ij}>0,\\
    \lvert x_j-\psi_i - V_{ij}\rvert \quad &\rho^0_{ij}=0, \rho^0_{ji'}>0,\\
    d(V_{ij}, [x_j-\phi_i, x_j-\psi_i]) \quad &\rho^0_{ij}=\rho^0_{ji'}=0,
\end{cases}
\]
where
\[
d(V_{ij}, [x_j-\phi_i, x_j-\psi_i]) = \begin{cases}
    (x_j-\phi_i) -V_{ij} \quad &\text{if } V_{ij}< x_j-\phi_i,\\
    V_{ij} - (x_j-\psi_i) \quad &\text{if } V_{ij} > x_j-\psi_i,\\
    0 \quad &\text{otherwise,}
\end{cases}
\]
is the projection of $V_{ij}$ onto the interval defined by $[x_j-\phi_i,x_j-\psi_i]$. Our earlier observation $\phi_i\geq \psi_i$ implies that this interval is always nonempty. This shows that $f_e(\pi)$ is a hinge loss. The problem (IDTFW) can now be written as
\begin{equation*}
    \begin{aligned}
        \mathrm{IDTFW}(V,\rho^0): \quad \underset{\pi=(\phi,x,\psi)}{\text{minimize}} & \quad f(\pi)=\sum_ef_e(\pi) \\
        \text{s.t.}\quad & \psi_i \leq \phi_i \quad & i \in [m],\\
        & \phi_i \leq \psi_i  \quad & i \in \{i:\rho^0_{ij}>0, \rho^0_{ji'}>0\}.
    \end{aligned}
\end{equation*}
Subgradients for $f_e(\pi)$ (and thus for $f(\pi)$) correspond to signed characteristic vectors given by the indices of $\pi$ associated with $\phi_i,x_j,\psi_i$ respectively. For example, in the case where $\rho^0_{ij}>0$ and $x_j-\phi_i<V_{ij}$, we have $\nabla f_e(\pi)=\ones_u-\ones_v$ where $u,v$ denote the respective indices of $\phi_i,x_j$ in $\pi$.

This formulation implicitly assumed that nodes $\phi_i,\psi_i$ exist for all $i\in [m]$, which corresponds to the case where $\Lambda \in (0,1)^m$. This is without loss of generality, as $\rho^0_{ij}=0$ for any $\lambda_i=0$ and $\rho^0_{ji'}=0$ for any $\lambda_i=1$ must hold by feasibility of $\rho^0$. Thus, any associated variables $\phi_i,\psi_i$ will never require equality over their coupling, and their only influence on $f_e(\pi)$ is via the third case ($\rho^0_{ij}=\rho^0_{ji'}=0$) where they are seen to allow for a degeneration of the closed interval into a half-line without affecting feasibility ($\psi_i\leq \phi_i$). Thus, the cost associated with adjusting these variables can always be made to be zero, which is equivalent to the omission of the arc entirely.

The formulation above is not an LP, but it does allow for straightforward evaluations of $f(\pi)$ and $\nabla f(\pi)$, which suggests it may be useful for iterative approximate optimization techniques such as subgradient/bundle methods. Since initial cost data $V$ may in fact be noisy, it reasons that for certain applications, such an approximation of the objective function could prove satisfactory. Regardless of noise assumptions, mitigating the quadratic growth in the constraint matrix may be beneficial for larger problems.

\subsection{TFW Network Design Problem}

Our final formulation looks at a problem that is not an inverse programming problem in the sense that we do not modify the objective function coefficients. Nonetheless, it shares similarities to the problems previously described and may be interesting in its own right. Given $x^0\in \R^n$, suppose that rather than perturbing weights, as in $\mathrm{ITFW}$, we instead allowed one to perturb the sample points, as in $\mathrm{IDTFW}$. The objective is to find a minimum cost perturbation of the sample $U$ such that $x^0\in \tfw(U,\Lambda,w)^*$. Like in $\mathrm{IDTFW}$, assigning $u_i\leftarrow x^0$ for all $i\in [m]$ shows this problem is always feasible. However, unlike $\mathrm{IDTFW}$, the set of binding constraints will (in general) vary across perturbations. Unlike in $\mathrm{ITFW}$, where the values assumed by $\phi_i$ and $\psi_i$ are static, here they can vary. Thus, this problem is generally more difficult than the previous two encountered.

The problem we just described has an interpretation as a network design problem \cite{MagnantiWong1984}. Consider the min-cost flow graph $\mathcal{G}$, with arc set $\mathcal{A}$ (see \cref{fig:mcf_formulation}), for some arbitrary instance of $\mathrm{TFW}(V,\Lambda,w)$, and let $\mathcal{G}^0$ denote the corresponding network with all the arcs removed. Suppose that we wish to add arcs to $\mathcal{G}^0$ in a manner that achieves feasibility in $\mathrm{MCF}$, where each added arc $(i,j)$ incurs a fixed cost $c_{ij}$. That is, we seek (at minimal total cost) a set of arcs whose addition to $\mathcal{G}^0$ enables some feasible flow. This is a network design problem with zero flow costs, binary design variables $y$ (one for each arc in $\mathcal{G}$), and fixed design costs $c_{ij}$. 

Consider any sample point $v_i$, and assume $\lambda_i\in(0,1)$ so that all arcs $\{(i,j) : j\in[n]\}$ and $\{(j,i') : j\in[n]\}$ exist in $\mathcal{G}$. Adding arc $(i,j)$ to $\mathcal{G}^0$ implies $j\in \argmax (x^0-u_i)$, which necessitates a coordinate-wise perturbation of $u_i$ from $v_i$ of at least
\[
(x^0_{j^*}-V_{ij^*})-(x^0_j-V_{ij})\geq0, \quad j^*\in \argmax(x^0-v_i),
\]
in the $j$-th coordinate direction. Likewise, adding arc $(j,i')$ necessitates a coordinate-wise shift of at least
\[
(x^0_j-V_{ij}) - (x^0_{j^*}-V_{ij^*})\geq0, \quad j^*\in \argmin(x^0-v_i).
\]
Since perturbing $v_i$ beyond these thresholds provides no additional benefit, we define $c_{ij}\coloneq \phi_i-(x^0_j-V_{ij})$ for $\{(i,j) : \lambda_i>0\}$ and $c_{ji}\coloneq (x^0_j-V_{ij})-\psi_i$ for $\{(j,i) : \lambda_i<1\}$, where $\phi_i,\,\psi_i$ are defined as before. Note that defining costs in this way implicitly minimizes perturbation of $\lVert u_i-v_i\rVert_{\mathrm{tr}}$ under the tropical norm, rather than under $\ell^1$ or using the more general penalties considered previously. The \emph{network design} tropical Fermat--Weber problem $\mathrm{NDTFW}(V,\Lambda,x^0)$ can be written as
\begin{subequations}\label{eq:model}
    \begin{align}
    \underset{x,\,y}{\text{minimize}} 
    & \quad \sum_{(i,j)\in A} c_{ij} y_{ij} \tag{\theparentequation} \\
    \text{s.t.}\quad 
    & \sum_{(i,j)\in \mathcal{A}} x_{ij}
     - \sum_{(j,i)\in \mathcal{A}} x_{ji}
     = b_i
    && \text{for } i\in N,
    \\
    & 0 \leq x_{ij} \leq M y_{ij}
    && \text{for } (i,j)\in \mathcal{A}, \label{eq:model-Ky}
    \\
    & y_{ij}\in\{0,1\}
    && \text{for } (i,j)\in \mathcal{A},
    \end{align}
\end{subequations}
for ``big-$M$'', a sufficiently large positive number. Constraints \cref{eq:model-Ky} prevent flow on arcs that are not purchased. To recover $U$ from $y^*$, we consider two possibilities. First, if for any $i\in [m]$ there exists $j\in[n]$ such that we have $y_{ij}=y_{ji'}=1$, then $j\in\argmin(x^0-v'_i)$ and $j\in \argmax(x^0-v'_i)$ implies $\dtr(v_i,x^0)=0$. Indeed, checking the incurred arc costs yields $c_{ij}+c_{ji'}=\phi_i-(x^0_j-V_{ij})+(x^0_j-V_{ij})-\psi_i=\dtr(v_i,x^0)$. If no such $j\in [n]$ exists, then simply apply
\[
u_i=v_i+\sum_{(i,j)\in \mathcal{A}}c_{ij}y_{ij}\ones_j-\sum_{(j,i')\in \mathcal{A}}c_{ji'}y_{ji'}\ones_j,
\]
to obtain a representative of $u_i\in \TPT{n}$. 

In general, mixed-integer linear programs such as \cref{eq:model} can be solved using a branch-and-bound style algorithm. Alternatively, one could consider a Bender's decomposition method in which subproblems correspond to feasibility assessments on $\mathrm{MCF}$ given some proposed ``arc-purchase'' solution $y^k$. Since such subproblems can be computed efficiently, this latter approach seems promising.

\section{Discussion}\label{sec:Discussion}



We describe tropical $\ell^{1,\infty}$ (pseudo)-norms applied to tropical locations problems. By leveraging an equivalent optimality criterion given by conditions on the objective function subdifferential, we provide an extension to problems in which the data are not limited to finite coordinates. When finite optimal solutions to the extended problem exist, we show how simple modifications to a min-cost flow formulation enable efficient computation. 

We also consider inverse formulations of tropical Fermat--Weber primal and dual problems, providing some characterizations on the space of feasible inverse solutions and providing linear programming formulations to compute their solution. We show that, unlike in a general inverse min-cost flow problem, the inverse tropical Fermat--Weber problem requires specific symmetries across arc costs that result in an inverse formulation that is not in general totally unimodular.

As mentioned in \cref{sec:inverse}, Fermat--Weber problems (and their inverses) have been considered in the case of negative weights, and with both positive and negative weights. This latter extension leads to the class of Fermat--Weber attraction/repulsion problem, which can be highly non-convex. Difference of convex (DC) programming methods have been considered for these types of problems \cite{TuyAlKhayyalZhou1995,NickelDudenhoeffer1997}, and their utility for solving tropical attraction/repulsion problems offers an interesting avenue for future work.

\backmatter





\bmhead{Acknowledgments}

J.~S.~and R.~Y.~are partially supported by the NSF Division of Mathematical Sciences: Statistics Program DMS 2409819.

\bibliography{sn-bibliography}

\end{document}